\newcommand{\pd}{\partial}                 
\renewcommand{\d}{{\rm{d}}}                
\newcommand{\dg}{\, \d \sigma(x)}    
\newcommand{\dgxi}{\, \d \sigma(\xi)} 
\newcommand{\dx}{\, {\rm{d}} x}            
\newcommand{\D}{\nabla}                    
\newcommand{\R}{\mathbb{R}}                
\newcommand{\Rn}{\mathbb{R}^n}                
\newcommand{\N}{\mathbb{N}}                
\newcommand{\Aset}{\mathcal A}                
\newcommand{\SL}{\Delta_{\Gamma}}          
\newcommand{\SLT}{\Delta_{\tilde \Gamma}}  
\newcommand{\SG}{\nabla_{\Gamma}}          
\newcommand{\scpgh}[2]{\langle #1,#2 \rangle_{\Gamma_h}}
\newcommand{\matr}[1]{{\boldsymbol #1}}  
\newcommand{\taum}{\Delta t_m}   
\renewenvironment{proof}[0] {\noindent{\em Proof.}}{\hfill \qed\\[1ex] }
\theoremstyle{plain}
\numberwithin{equation}{section}
\newtheorem{lemma}{Lemma}[section]
\newtheorem{theorem}[lemma]{Theorem}
\newtheorem{proposition}[lemma]{Proposition}
\theoremstyle{definition}
\newtheorem{remark}[lemma]{Remark}
\begin{document} 

\title[Turing instabilities]
{Turing instabilities in a mathematical model for signaling networks}

\author[A. R\"atz]{Andreas R\"atz}
\email{andreas.raetz@tu-dortmund.de} 

\author[M. R\"oger]{Matthias R\"oger}
\email{matthias.roeger@tu-dortmund.de} 

\subjclass[2000]{92C37,35K57,35Q92}

\keywords{Turing instability, non-local reaction-diffusion system, signaling molecules}

\date{\today}
\maketitle

\begin{abstract}
GTPase molecules are important regulators in cells that continuously run through an activation/deactivation and membrane-attachment/membrane-detachment cycle. Activated GTPase is able to localize in parts of the membranes and to induce cell polarity. As feedback loops contribute to the GTPase cycle and as the coupling between membrane-bound and cytoplasmic processes introduces different diffusion coefficients a Turing mechanism is a natural candidate for this symmetry breaking. We formulate a mathematical model that couples a reaction--diffusion system in the inner volume to a reaction--diffusion system on the membrane via a flux condition and an attachment/detachment law at the membrane. We present a reduction to a simpler non-local reaction--diffusion model and perform a stability analysis and numerical simulations for this reduction. Our model in principle does support Turing instabilities but only if the lateral diffusion of inactivated GTPase is much faster than the diffusion of activated GTPase.
\end{abstract}

\section{Introduction}
\label{sec:intro}
GTP-binding proteins (GTPases) are crucially involved in many processes in cells such as membrane traffic, cellular transport, signal transduction, or cytoskeleton organization \cite{TaSM01,JaHa05}. Common to the diverse families of GTPase is the cycling between an active and an inactive state. Besides the activation-inactivation cycle there is also a spatial cycle: in the cytosol almost all GTPase is inactive whereas the active state is only present at the membrane. Reaction and diffusion processes both in the cytosolic volume and on the membrane surfaces as well as membrane attachment and detachment contribute to the proper function of GTPase molecules. 

For different GTPase localization into subcellular compartments has been observed and has been recognized as crucial for its function. Cluster formation of activated small GTPase Cdc42 precedes the budding of yeast \cite{PaBi07}, other  small GTPase of the Rho-subfamily are known to form micro-domains on continuous membranes \cite{PfAi04,SRNR00}. Such a transition from a homogeneous distribution to a polarized state is often key for the formation and maintenance of complex structures. The emergence of localized structures is typically driven by a continuous input of energy \cite{NiPr77}. Turing \cite{Turi52,Murr90} pioneered models for symmetry breaking by diffusion-driven instabilities. These are based on a slowly diffusing self-activator and a highly diffusive antagonist \cite{KoMe94}. Self-activation is typically present by some kind of feedback. In activator--substrate-depletion type Turing mechanisms the production of the activator induces a decrease of the substrate. Diffusion-driven instabilities typically require large differences in the diffusion coefficients of the activator and its antagonist. In many biological applications this is not realistic and Turing type mechanisms can therefore not explain symmetry breaking events. In our context, however,  cytosolic diffusion is typically much faster than lateral diffusion. Coupled systems of 2D and 3D reaction--diffusion processes might therefore be a candidate for a realistic Turing mechanism.

Distinct mathematical models for the GTPase cycle have been proposed
and analyzed, with diverse conclusions. A general model for signaling
molecules in a cell that cycle between a non-recruiting cytosolic
state and a recruiting membrane-bound one has been evaluated in
\cite{AAWW08}. There the emergence of cell polarity has been
demonstrated for an intrinsically \emph{stochastic} mechanism for
self-activation by positive feedback. A corresponding deterministic
model in contrast has been shown not to produce any heterogeneous
pattern. However, the deterministic PDE model in \cite{AAWW08} does
not directly treat processes in the cytosol as all variables are
membrane bound and all reactions are local.  A complex PDE model that
accounts for chemical reactions, membrane-cytoplasm exchange and
diffusion is given in \cite{GoPo08}. There a scaling factor accounts
for the different volume of a thin 3D membrane layer and the inner
volume. Variables representing membrane bound molecules and variables
representing cytosolic quantities however both have the same domain of
definition. Numerical simulations and a linear stability analysis show
that the model allows for a Turing mechanism. The formation of
micro-domains in a GTPase cycle model are also studied in
\cite{BDKR07}. Here the equations are formulated on a flat membrane
surface. It is shown that no Turing pattern can occur unless an extra
flux term is included. This flux term accounts for interactions
between GTPase and membrane proteins and represents a phase separation
type energy gradient. As an alternative explanation for the emergence
of cell polarity in GTPase mediated processes a `wave-pinning'
mechanism is proposed in \cite{MoJE08}. A two component system for the
nucleotide cycle is suggested with a Hills type non-linearity that
leads to a bistability. Domains are formed by emerging traveling waves
that are stopped by a decreased supply of non-activated GTPase.

Our goal is to introduce a model for the GTPase cycle with an improved coupling of processes with different dimensionalities. We investigate whether a Turing type instability -- of activator--substrate depletion type -- could possibly explain the localization of activated GTPase on the membrane. In Section \ref{sec:model} we will first formulate our mathematical model and derive  a reduction that only incorporates membrane-bound active and membrane-bound inactive GTPase. We perform a stability analysis and numerical simulations for this reduction. The explicit dimensional coupling in the full model is still reflected by the appearance of a \emph{non-local} term. We show in Section \ref{sec:turing} that for this model Turing patterns are possible. Our numerical simulations in Section \ref{sec:numerics} confirm this result and shed some light on the kind of patterns that are supported by our model and the influence of changes in different parameters. We develop here a general numerical scheme that can be extended to general membrane geometries and to more involved coupling laws. In Section \ref{sec:no-turing} we investigate -- even for a more general class of similar models -- whether for equal diffusion constants of activator and substrate, \emph{i.e.} activated and non-activated GTPase, Turing pattern are possible. Our results will finally be discussed in Section \ref{sec:discussion}. 
\section*{Acknowledgment}
We would like to thank Roger Goody and Yaowen Wu from the Max-Planck institute for molecular physiology for introducing us to the biochemistry of signaling networks.
\section{Model Description}\label{sec:model}
\subsection{Mechanistic description of the GTPase cycle}
Here we briefly review the key steps of the GTPase cycle as indicated
in Fig. \ref{fig:cycle}. Chemically, the difference between the active and inactive state of the GTPase is that in the active state \emph{guanine-tri-phosphate} (GTP) is bound whereas \emph{guanine-di-phosphate} (GDP) is bound in the inactive state. Only activated GTPase interacts with downstream effectors. Activation of a GTPase is by exchange of GDP by GTP, inactivation by hydrolysis and dephosphorylation of GTP to GDP. Both processes are intrinsically very slow and need the catalyzation by a GEF (\emph{guanine exchange factor}) and GAP protein (\emph{GTPase activating protein}), respectively \cite{GuAG05,BoRW07}. Cytosolic GTPase can only be found in complex with a displacement inhibitor (GDI) that prevents the binding of GTPase to the membrane. As the affinity of GTPase towards GDI is much higher when GDP is bound, predominantly the inactive state occurs in the cytosol \cite{GoRa05}. How GDP-bound GTPase is released from the complex with GDI and how it associates to the membrane is less clear, mediation by a GDI displacement factor (GDF) has been proposed as a possible mechanism \cite{Pfef03}. For several GTPase positive feedback loops have been identified that support the activation of GTPase. Activated GTP-Rab5 is known to recruit a cytosolic GEF-effector complex (Rabex5 and Rabaptin5) to the membrane and to increase the activity of the GEF \cite{GrON06}. A similar feedback loop has been found for activated Cdc42 GTPase \cite{WAWL03}. In the following we formulate a mathematical model that reflects the key features of a GTPase cycle. Our main focus is on the treatment of the dimensional coupling and less on a detailed description of the reaction kinetics.
\begin{figure}[h]
  \centering
  \includegraphics[width=0.95\textwidth]{./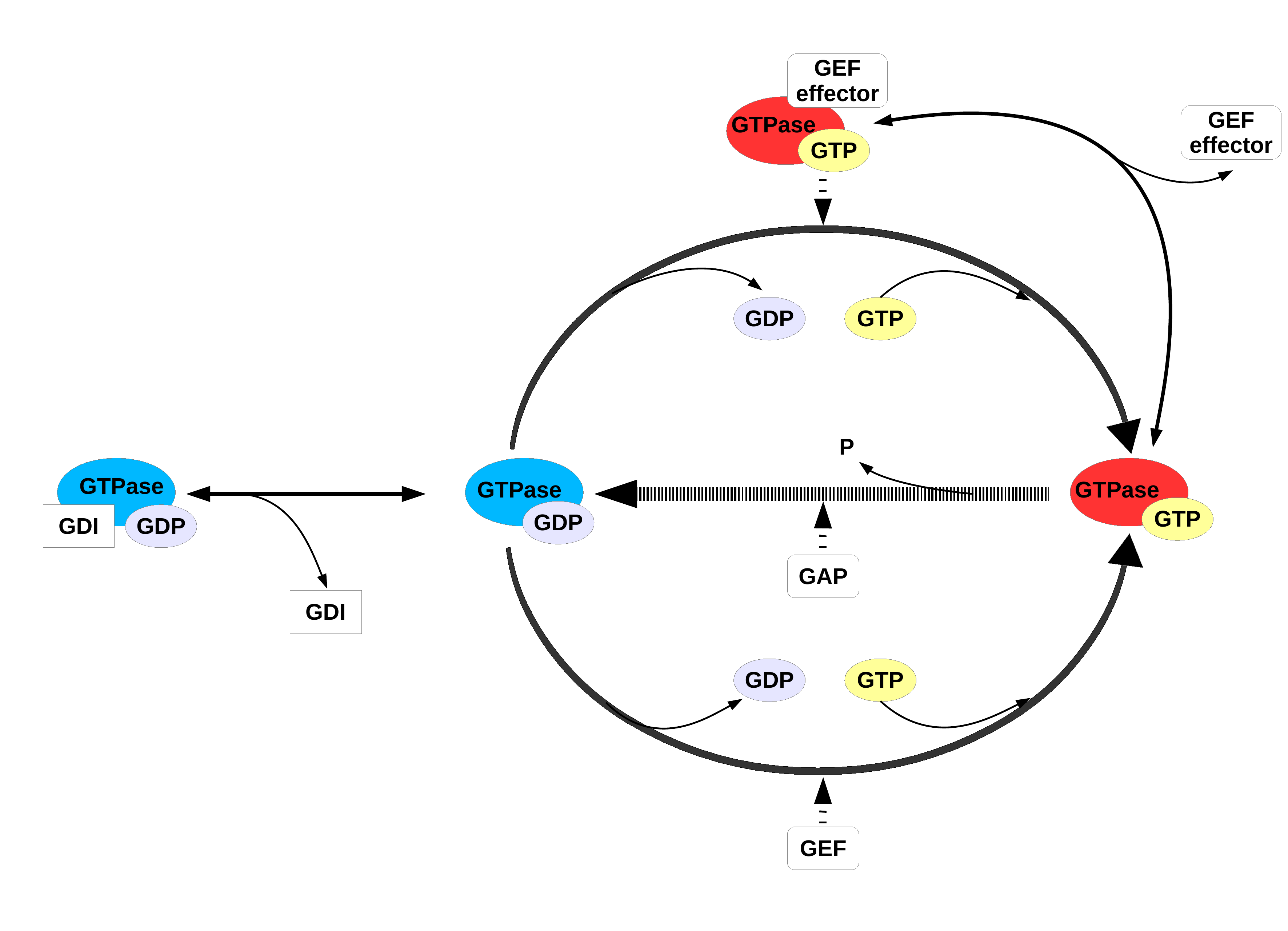}
  \caption{\label{fig:cycle} The GTPase reaction cycle:
    The activation of GDP-bound GTPase is either catalyzed by GEF (lower semi
    circle) or by an effector--GEF--GTP-GTPase complex (upper semi circle).
    The inactivation of GTP-bound GTPase is catalyzed by GAP.  Further reactions that are depicted are
    GDP-GDPase--GDI complex formation/dissociation and
    effector--GEF--GTP-GTPase complex formation/dissociation. See the text for additional information.}
\end{figure}
\subsection{The mathematical model}
We restrict ourselves to the investigations of processes in the cytosol and at the outer plasma membrane only. Inner organelles with additional membrane boundaries could be included as well. The cytosolic volume of a cell is represented by a bounded, connected, open domain $B \subset \R^3$ and the cell membrane by the boundary of $B$ that we assume to be given by a smooth, closed two-dimensional surface $\Gamma := \pd B$ without boundary. In addition we
fix a time interval of observation $I:=[0,T] \subset \R$. We formulate a system
of PDE's for the following unknowns:
\begin{align*}
	V &: \overline B \times I \to\R &&\text{concentration of cytosolic
  GDP-GTPase (in complex with GDI)},\\
	v &:\Gamma \times I \to \R &&\text{concentration of membrane-bound GDP-GTPase},\\
	u &:\Gamma \times I \to \R &&\text{concentration of membrane-bound GTP-GTPase},\\
	m &:\Gamma \times I \to \R &&\text{concentration of membrane-bound,
  effector--GEF--GTP-GTPase complex},\\
	g&:\Gamma \times I \to \R &&\text{concentration of membrane-bound GEF.}
\end{align*}
We prescribe initial conditions at time $t=0$,
\begin{align*}
	&V(\cdot, 0) \,=\, V_0,\quad v(\cdot, 0) \,=\, v_0,\quad
	u(\cdot, 0) \,=\, u_0,\quad m(\cdot, 0) \,=\, m_0, \quad
	g(\cdot, 0) \,=\, g_0,\\
	&V_0\,:\, B\,\to\, \R,\quad v_0,u_0,m_0,g_0\,:\,\Gamma\,\to\,\R.
\end{align*}
The coupling condition between cytosolic and membrane processes involves a Neumann boundary condition for $V$ that is specified below. Physical units are given by
\begin{equation*}
  [V] = \frac{\text{mol}}{\text{m}^3}, \quad 
  [u] = [v] = [m] = [g] = \frac{\text{mol}}{\text{m}^2}.
\end{equation*}
Much more extended sets of variables could be considered here. In particular we do not explicitly take into account the effector and GAP concentrations. Catalyzation of the inactivation process will be described implicitly.  
\subsubsection{Reaction Kinetics}
We assume simple mass action kinetics or a Michaelis--Menten type law for catalyzed reactions. For the change of concentration of the above variables due to reactions we prescribe the following equations. Our choices here  are similar to the more general model in \cite{GoPo08}.

The concentration of membrane-bound GDP-GTPase is decreased by the activation process, which is catalyzed by both GEF and the effector--GEF--GTP-GTPase complex. For the corresponding rates we assume that they are proportional to the GDP-GTPase concentration and the concentrations of the catalysts. Vice versa GDP-GTPase is produced by the inactivation of GTP-GTPase. Since we have not taken the GAP concentration into account, we here assume a Michaelis--Menten law for the kinetics. The change of $v$ due to activation and inactivation we therefore describe by
\begin{align}
  \label{eq:v_kin}
  [\pd_t v]_{\text{reaction}} &= - k_1 v g - k_2 v m + k_3 \frac{u}{u
    + k_4}  \quad \text{on} \quad \Gamma
  \times I.
\end{align}
For the change in $u$ we have in addition to the processes above the production of $u$ by dissociation of the effector -- GEF -- GTP-GTPase complex and the loss due to the formation of this complex. We model this by the equation
\begin{align}
  \label{eq:u_kin}
  [\pd_t u]_{\text{reaction}} &= k_1 v g + k_2 v m - k_3 \frac{u}{u + k_4} -
  k_5 ug + k_{-5}m
  \quad \text{on} \quad \Gamma \times I.
\end{align}
Correspondingly complex formation and dissociation lead to the following laws for the concentration of the complex and GEF (or rather a GEF--effector complex as we do not take explicitly into account the effector).
\begin{align}
  \label{eq:m_kin}
  [\pd_t m]_{\text{reaction}} &= k_5 u g - k_{-5} m \quad
  \text{on} \quad \Gamma \times I,\\
  \label{eq:g_kin}
  [\pd_t g]_{\text{reaction}} &= - k_5 u g + k_{-5}m \quad
  \text{on} \quad \Gamma \times I.
\end{align}
Units for the reaction rates $k_i$ are given by
\begin{equation*}
  [k_1] = [k_2] = [k_5] = \frac{\text{m}^2}{\text{mol}\cdot\text{s}}; \quad
  [k_3] = \frac{\text{mol}}{\text{m}^2\text{s}} ; \quad
  [k_4] = \frac{\text{mol}}{\text{m}^2} ; \quad
  [k_{-5}] = \frac{1}{\text{s}}.
\end{equation*}

\subsubsection{Simplified Kinetics}
For later use in the mathematical analysis we use a quasi--steady state approximation for the complex
formation to do a first reduction of our kinetic laws. We assume
\begin{equation}
  m = \frac{k_5}{k_{-5}}ug \label{eq:m-stst}
\end{equation}
and use GEF--conservation
\begin{equation}
  m + g = \text{const} =: \bar{g}_0, \label{eq:gef-cons}
\end{equation}
where $[\bar{g}_0] = [g] = \frac{\text{mol}}{\text{m}^2}$. If we take initial data $m_0=0$ we have $\bar{g}_0=g_0$. Equations \eqref{eq:m-stst}, \eqref{eq:gef-cons} yield
\begin{align*}
  m &= \frac{K_5ug_0}{1 + K_5u},\\
  g &= g_0\left(1 - \frac{K_5u}{1 + K_5 u}\right)
\end{align*}
with $K_5:=\frac{k_5}{k_{-5}}$ and $[K_5] =
\frac{\text{m}^2}{\text{mol}}$. From this, one obtains simplified
rate equations for the change due to reactions
\begin{align*}
  [\pd_t v]_{\text{reaction}} &= - k_1 v g_0\left(1 - \frac{K_5u}{1 +
      K_5 u}\right) - k_2 v \frac{K_5ug_0}{1 + K_5u} + k_3 \frac{u}{u
    + k_4}  \quad \text{on} \quad \Gamma \times I,\\ 
  [\pd_t u]_{\text{reaction}} &= k_1 v g_0\left(1 - \frac{K_5u}{1 +
      K_5 u}\right) + k_2 v \frac{K_5ug_0}{1 + K_5u} - k_3 \frac{u}{u +
    k_4} = - [\pd_t v]_{\text{reaction}}
  \quad \text{on} \quad \Gamma \times I.
\end{align*}

\subsubsection{Diffusion}
We describe cytosolic diffusion of the (inactive) GTPase in $B$ by the standard Laplace diffusion operator $\Delta$ and a diffusion constant $D>0$. Lateral diffusion on the membrane is described by the Laplace--Beltrami--operator  $\SL$ (which is the generalization of the ordinary Laplacian to surfaces \cite{DiHS10}) and diffusion constants $d_u$ and $d_v$ for the active and inactive membrane-bound GTPase concentrations, respectively.
\begin{align}
  \label{eq:V_diff}
  [\pd_t V]_{\text{diffusion}} &= D \Delta V \quad \text{in} \quad B
  \times I,\\
  \label{eq:u_diff}
  [\pd_t u]_{\text{diffusion}} &= d_u \SL u \quad \text{on} \quad
  \Gamma \times I,\\
  \label{eq:v_diff}
  [\pd_t v]_{\text{diffusion}} &= d_v \SL v \quad \text{on} \quad \Gamma
  \times I.
\end{align}
\subsubsection{Membrane attachment and detachment}
We describe the association and dissociation of inactive GTPase at the membrane by a flux boundary condition for $V$ at $\Gamma$
\begin{equation}
  \label{eq:flux1}
  -D \nabla V \cdot \nu = q  \quad \text{on} \quad \Gamma,
\end{equation}
where $\nu$ denotes the outer normal to $B$ at $\Gamma$. For the flux $q$ we formulate a constitutive equation: membrane attachment is treated as a reaction between cytosolic GTPase and a free site on the membrane and modeled by a Langmuir rate law \cite{Kell09}. Detachment is taken proportional to the inactive GTPase concentration, which together gives the equation
\begin{equation}
  \label{eq:flux2}
  q = b_6 \frac{|B|}{|\Gamma|}V (c_{\max} - u - v)_+ - b_{-6}v.
\end{equation}
Here $c_{\max}$ denotes a saturation value and $b_6, b_{-6}$ are
sorption coefficients. By $ (c_{\max} - u - v)_+$ we denote the positive part of $c_{\max} - u - v$ as adsorption stops when the saturation value is reached. $|B|$ and $|\Gamma|$ denote the
$3$-dimensional volume of $|B|$ and the $2$-dimensional
surface area of $\Gamma$, respectively. In \eqref{eq:flux2} $\frac{|B|}{|\Gamma|}V$ has to be understood as the trace of the cytosolic GDP-GTPase concentration and has units $\frac{\text{mol}}{\text{m}^2}$. The units of the various coefficients are given by 
\begin{equation*}
  [D] = [d_u] = [d_v] = \frac{\text{m}^2}{\text{s}}; \quad 
  [b_6] = \frac{\text{m}^2}{\text{mol} \cdot \text{s}}; \quad 
  [b_{-6}] = \frac{1}{\text{s}}.
\end{equation*}

\subsubsection{Reaction, Diffusion, and attachment/detachment}
Taking reaction and diffusion into account, one obtains the following
model, which is the basis of further considerations 
\begin{align}
  \label{eq:V-dim}
  \pd_t V &= D \Delta V \quad \text{in} \quad B
  \times I,\\
  \label{eq:u-dim}
  \pd_t u &= d_u \SL u + k_1 v g_0\left(1 - \frac{K_5u}{1 +
      K_5 u}\right) + k_2 v \frac{K_5ug_0}{1 + K_5u} - k_3 \frac{u}{u +
    k_4}\quad \text{on} \quad
  \Gamma \times I,\\
  \label{eq:v-dim}
  \pd_t v &= d_v \SL v - k_1 v g_0\left(1 - \frac{K_5u}{1 +
      K_5 u}\right) - k_2 v \frac{K_5ug_0}{1 + K_5u} + k_3 \frac{u}{u
    + k_4} + q\quad \text{on} \quad \Gamma
  \times I
\end{align}
with the flux conditions \eqref{eq:flux1}, \eqref{eq:flux2}. The model
satisfies conservation of GTPase in the form
\begin{equation*}
  \frac{\rm{d}}{\rm{d} t} \left(\int_B V \dx + \int_\Gamma (u + v)
    \dg\right) =0,
\end{equation*}
where $\dg$ denotes integration with respect to the surface area measure. This equation confirms that the total number of cytosolic inactive plus membrane-bound active and inactive GTPase is constant over time.
\subsubsection{Non--Dimensionalization}
For $x \in \overline B$ and $t \in I$, we introduce non-dimensional
coordinates $\xi$ 
\begin{align*}
  \xi := \frac1{R}x, 
\end{align*}
where $R > 0$ denotes a typical length, e.g. half the diameter of the
cell. We represent this length as $R=\sqrt{\gamma}\,\mathbb{I}$ with
$\gamma>0$ and $\mathbb{I}=1\text{m}$ denoting the unit
length. Furthermore, we use a dimensionless time
\begin{equation*}
  \tau := \frac{d_u}{R ^2} t.
\end{equation*}
This leads to transformed domain $\tilde B := \{\xi \in \R^3 : R
\xi \in B \}$, $\tilde \Gamma := \pd \tilde B$ and time interval $\tilde
I := [0,\frac{d_u}{R ^2}T]$. Non-dimensional Rab concentrations
are defined through 
\begin{equation*}
  \tilde V := \frac{R}{c_{\max}} V , \quad 
  \tilde u := \frac{1}{c_{\max}} u, \quad
  \tilde v := \frac{1}{c_{\max}} v. 
\end{equation*}
We introduce dimensionless quantities
\begin{gather*}
  a_1 := \frac{\mathbb{I} ^2}{d_u}k_1g_0, \quad 
  a_2 := \frac{1}{K_5c_{\max}}, \quad
  a_3 := \frac{k_2}{k_1}a_1, \quad
  a_4 := \frac{\mathbb{I} ^2}{d_u c_{\max}} k_3, \quad
  a_5 := \frac{k_4}{c_{\max}}, \\
  a_6 := \frac{\mathbb{I}^2 b_6}{d_u} c_{\max}\frac{|B|}{|\Gamma|R},\quad
  a_{-6} := \frac{\mathbb{I} ^2 b_{-6}}{d_u},\quad
  d := \frac{d_v}{d_u}, \quad
  \tilde D := \frac{D}{d_u}.
\end{gather*}
Note that $\frac{|B|}{|\Gamma|R}$ is scale invariant in the sense that multiplying the system size by a constant factor $\alpha>0$ does not affect this value. In particular all above constants are independent of the system size, which is solely represented by the dimensionless quantity $\gamma$. With these
definitions, one easily verifies
\begin{align}
  \label{eq:V_tilde}
  \pd_\tau \tilde V &= \tilde D \Delta_\xi\tilde V \quad \text{in} \quad
  \tilde B  \times \tilde I,\\
  \label{eq:u_tilde}
  \pd_\tau \tilde u &= \SLT \tilde u + \gamma\Big(\left(a_1 + (a_3 -
    a_1)\frac{\tilde u}{a_2 + \tilde u} \right)\tilde v - a_4
  \frac{\tilde u}{a_5 +\tilde u}\Big)\quad \text{on} \quad
  \tilde \Gamma \times \tilde I,\\
  \label{eq:v_tilde}
  \pd_\tau \tilde v &= d \SLT \tilde v + \gamma\Big(-\left(a_1 + (a_3 -
    a_1)\frac{\tilde u}{a_2 + \tilde u} \right)\tilde v + a_4
  \frac{\tilde u}{a_5 +\tilde u} + \tilde q\Big)\quad \text{on} \quad \tilde
  \Gamma \times \tilde I
\end{align}
with the flux condition
\begin{equation*}
  -\tilde D \nabla_\xi \tilde V \cdot \tilde \nu = \gamma \tilde q \quad
  \text{on} \quad \tilde \Gamma \times \tilde I   
\end{equation*}
with
\begin{equation}
  \label{eq:q_tilde}
  \tilde q = 
  a_6 \tilde V (1 - (\tilde u + \tilde v))_+ - a_{-6} \tilde v.
\end{equation}

\subsubsection{Reduction}
We further reduce the non-dimensional model of the previous section. Our reduction is motivated by the observation that the cytosolic diffusion coefficient is much larger than that of the lateral diffusion on the membrane \cite{PBLH04}. We thus assume $\tilde V$ to be spatially constant, i.e. $\tilde V =
\tilde V(\tau)$ depends only on time but not on the $\xi$ variable. If we also assume that the initial concentration of cytosolic GTPase is spatially homogeneous the concentration is then for positive times determined by GTPase conservation,
\begin{equation}
  \label{eq:V_tilde2}
  \tilde V(\tau) = \bar{V}_0 - c \int_{\tilde \Gamma} (\tilde u + \tilde v)(\xi,\tau) \dgxi
\end{equation}
where $c := |\tilde B|^{-1}$ and where $\bar{V}_0$ is given by the initial conditions,
\begin{gather*}
	\bar{V}_0 \,=\, \tilde{V}(0) + c \int_{\tilde \Gamma} (\tilde u + \tilde v)(0,\xi) \dgxi.
\end{gather*}
In particular $\bar{V}_0=\tilde{V}(0)$ if initially no membrane-bound GTPase was present.
In the following, we then consider the system
\eqref{eq:u_tilde}--\eqref{eq:v_tilde} of reaction diffusion equations
on $\tilde \Gamma \times \tilde I$ including the flux
\eqref{eq:q_tilde} and the conservation law
\eqref{eq:V_tilde2}.

The fully coupled system converges in the limit $D\to\infty$ to this
reduced model. However, no estimates for the difference between
solutions to the respective models are at present available.  The
ratio between cytosolic and lateral membrane diffusion reported in the
literature \cite{PBLH04} is of order $10^2$. Numerical experiments for
the full system with diffusion coefficients $d=D=10^3$ showed
qualitative agreement with the reduction.
\section{Turing pattern}\label{sec:turing}
In this section we investigate the stability properties and the possibility of Turing-type pattern formation for the dimensionless reduced model derived above. In the following we drop all tildes and denote the space and time variables by $x$ and $t$ respectively. This yields the system
\begin{align}
	\partial_t u \,&=\, \Delta_\Gamma u + \gamma f(u,v), \label{eq:u}\\
	\partial_t v \,&=\, d\Delta_\Gamma v + \gamma \left(-f(u,v) + q(u+v,v,V[u+v])\right) \label{eq:v}
\end{align}
where
\begin{align}
	f(u,v)\,&=\, \left(a_1 + (a_3 - a_1)\frac{u}{a_2 + u} \right) v - 
	a_4\frac{u}{a_5 +u},\\
	q(u+v,v,V)\,&=\, a_6 V (1 - (u + v))_+ - a_{-6} v,
\end{align}
and where $V[u+v]$ is the \emph{non-local} functional
\begin{gather}
	V[u+v]\,=\, V_0 - c \int_{\Gamma} (u + v) \dg, \label{eq:V}
\end{gather}
with $V_0$ given. For convenience we also define
\begin{gather*}
	g(u,v)\,=\, -f(u,v) + q(u+v,v,V[u+v]).
\end{gather*}
System \eqref{eq:u}, \eqref{eq:v} has to be solved on $\Gamma\times I$. Our particular interest is to understand the effect of the non-local term in system \eqref{eq:u}, \eqref{eq:v} on the stability properties. We can not use a predefined set of `realistic' parameter values: first our model is very general and applies to several specific cases with different set of parameters; second kinetic rates etc. are difficult to obtain experimentally. We therefore rather investigate whether \emph{in principle}, i.e. for \emph{some} parameter values, our model allows for stationary states, whether stationary states of substrate--depletion type exists, and whether Turing type instabilities are possible. It is quite difficult to guess parameters that allow for example for Turing pattern formation. We therefore derive conditions for the parameters that are sufficient to ensure certain behavior, in particular showing that the \emph{Turing space} is not empty. With such a set of parameters identified it is possible to explore the boundaries of the Turing space and then compare whether the parameter ranges are reasonably close to available estimates for `realistic' values.

To start with our stability analysis we first observe that spatially homogeneous solutions of \eqref{eq:u}, \eqref{eq:v} satisfy the ODE system
\begin{align}
	\partial_t u \,&=\, \gamma f(u,v), \label{eq:ode-u}\\
	\partial_t v \,&=\, \gamma \left(-f(u,v) + q_0(u+v,v)\right) \label{eq:ode-v}
\end{align}
where
\begin{gather}
	q_0(u+v,v)\,=\, a_6 \left(V_0 -c|\Gamma|(u+v)\right)(1 - (u + v))_+ - a_{-6}v.
\end{gather}
We set $g_0(u,v)\,=\, -f(u,v) + q_0(u+v,v)$. The set of values for $u,v$ described by
\begin{gather}
	\Aset \,:=\,  \big\{ u,v \,\geq\, 0 \,: \, u+v \leq \min\{1,m\}\big\},\quad m\,:=\,\frac{V_0}{c|\Gamma|}
\end{gather}
is an invariant region for \eqref{eq:ode-u}, \eqref{eq:ode-v}, \emph{i.e.} if the initial data are in this set the solution does not leave it. In fact we observe that at the boundaries of $\Aset$ we obtain
\begin{gather*}
  f(0,v)\,\geq\, 0,\quad g_0(u,0)\,\geq 0 \quad \text{ for all }
  0\,\leq\,u,v\,\leq \min\{1,m\},\\ 
  f(u,v)+g_0(u,v) \,\leq\,0\quad\text{ for all } u,v\geq 0,\,
  u+v=\min\{1,m\}. 
\end{gather*}
By these inequalities the conclusion follows. 

Under suitable conditions on the data we next show the existence of a stationary spatially homogeneous state $(u_*,v_*)\in\Aset$ for \eqref{eq:u}, \eqref{eq:v}. This means that $(u_*,v_*)$ has to satisfy
\begin{align}
	0\,&=\, f(u_*,v_*), \label{eq:f=0}\\
	0\,&=\, g_0(u_*,v_*). \label{eq:g=0}
\end{align}
The first equation is satisfied if and only if
\begin{gather}
	v\,= v[u]\,:=\, \frac{a_4 u(a_2+u)}{(a_5+u)(a_1a_2+a_3u)}. \label{eq:def-vu}
\end{gather}
We compute 
\begin{gather}\label{eq:v-prime}
	v'[u]=0 \quad\Leftrightarrow\quad \big(a_1a_2+a_3(a_5-a_2)\big)u^2 + 2a_1a_2a_5u +a_1a_2^2a_5\,=\,0.
\end{gather}
If we assume
\begin{align}
	a_2\,&> a_5, \label{eq:ass1}\\
	2 a_1 a_2 \,&<\, a_3(a_2-a_5) \label{eq:ass2}
\end{align}
we find that $v[\cdot]$ has a unique positive stationary point $u_0$,
\begin{gather*}
	u_0\,=\, \frac{a_1 a_2 a_5}{a_3(a_2-a_5)-a_1a_2} + a_2\sqrt{a_1a_5}\frac{\sqrt{(a_3-a_1)(a_2-a_5)}}{a_3(a_2-a_5)-a_1a_2}.
\end{gather*}
In particular we have
\begin{gather}
	v'[u]\,<\, 0\quad\text{ for all }u>u_0. \label{eq:est-v-prime}
\end{gather}
By \eqref{eq:ass1}, \eqref{eq:ass2} we estimate
\begin{align}
	u_0\,&\leq\, \sqrt{a_1a_5}a_2 \Big( \frac{2\sqrt{a_1a_2}}{a_3(a_2-a_5)} + 2\frac{1}{\sqrt{a_3(a_2-a_5)}}\Big)\label{eq:est-u0}\\
	&\,\leq\, 4\frac{\sqrt{a_1a_5}a_2}{\sqrt{a_3(a_2-a_5)}}. \notag
\end{align}
For the maximum value $v_0:=v[u_0]$ we obtain
\begin{gather*}
	v_0\,=\, a_4\frac{a_2+2 u_0}{a_1a_2+a_3(a_5+2u_0)}.
\end{gather*}
Using \eqref{eq:ass1} and \eqref{eq:ass2} a short calculation yields
\begin{align}
	v_0 \,&\leq\, \frac{a_2a_4}{a_1a_2 +a_3a_5}\,\leq\, \frac{a_2a_4}{a_3a_5}, \label{eq:upper-v0}\\
	v_0\,&\geq\, \frac{a_4a_5}{a_1a_2+a_3a_5}\,\geq\, \frac{a_4a_5}{a_3a_2}. \label{eq:lower-v0}
\end{align}
If we then choose
\begin{align}
	\frac{a_2a_4}{a_3a_5}\,&<\, \frac{1}{4}\min\{m,1\}, \label{eq:ass3} \\
	a_1\,&<\, 2^{-8}\frac{a_3^2(a_2-a_5)^2}{a_2^2a_5}(\min\{m,1\})^2 \label{eq:ass4}
\end{align}
we deduce by \eqref{eq:est-u0}, \eqref{eq:upper-v0} that $u_0+v_0<\frac{1}{2}\min\{m,1\}$. 

In order to satisfy \eqref{eq:g=0} we need to find $u>0$ with $u+v[u]<\min\{m,1\}$ such that
\begin{gather*}
	0\,=\, a_6 \left(V_0 -c|\Gamma|(u+v[u])\right)(1 - (u + v[u])) - a_{-6}v[u]\,=:\,\Phi(u).
\end{gather*}
We evaluate
\begin{gather*}
	\Phi(u_0)\,>\, \frac{a_6}{4}V_0 - a_{-6}\frac{a_4a_5}{a_3a_2}
\end{gather*}
and assuming
\begin{gather}
	\frac{a_{-6}}{a_{6}}\,\leq\,\frac{V_0}{4}\frac{a_3a_2}{a_4a_5} \label{eq:ass5}
\end{gather}
we see that $\Phi(u_0)>0$. On the other hand there exists $u_1>u_0$ such that $u_1+v[u_1]=\min\{m,1\}$ and we observe that
\begin{gather*}
	\Phi(u_1)\,<\, 0.
\end{gather*}
Since $\Phi$ is continuous we obtain from the intermediate-value
Theorem that there exists $u_0<u_*<u_1$ such that $\Phi(u_*)=0$. But
this implies that $(u_*,v_*)\in\Aset$, $v_* = v[u_*]$, is a stationary point of \eqref{eq:ode-u},\eqref{eq:ode-v}.
In summary we have proved the following Proposition.
\begin{proposition}\label{prop:stat-sol}
Assume that the conditions
\begin{align}
	a_2\,&>\, a_5, \label{cdt:1}\\
	4a_2a_4\,&<\, a_3a_5\min\{m,1\}, \label{cdt:2}\\
	4a_4a_5a_{-6}\,&<\, V_0 a_2a_3a_6, \label{cdt:3}\\
	a_1\,&<\, \min\Big\{ \frac{a_3(a_2-a_5)}{2a_2}\,,\,2^{-8}\frac{a_3^2(a_2-a_5)^2}{a_2^2a_5}(\min\{m,1\})^2 \Big\}\label{cdt:4}
\end{align}
are satisfied. Then there exists a stationary spatially homogeneous solution $(u_*,v_*)\in\Aset$ of \eqref{eq:u}, \eqref{eq:v}.
\end{proposition}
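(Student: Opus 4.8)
The plan is to reduce the problem to a scalar root-finding exercise and to close it with the intermediate value theorem. A spatially homogeneous solution of \eqref{eq:u}, \eqref{eq:v} is exactly a solution of the ODE system \eqref{eq:ode-u}, \eqref{eq:ode-v} (for constant fields the non-local functional $V[u+v]$ collapses to $V_0-c|\Gamma|(u+v)$), so a stationary one has to solve $f(u_*,v_*)=0$ together with $g_0(u_*,v_*)=0$. First I would use that for $u>0$ the equation $f(u,v)=0$ is equivalent to $v=v[u]$ with $v[u]$ as in \eqref{eq:def-vu}; substituting this into $g_0$ turns the remaining condition into $\Phi(u)=0$, where $\Phi(u):=a_6(V_0-c|\Gamma|(u+v[u]))(1-(u+v[u]))-a_{-6}v[u]$, which on the set of $u>0$ with $u+v[u]<\min\{1,m\}$ genuinely coincides with $g_0(u,v[u])$, since there the positive part in $q_0$ and the factor $V_0-c|\Gamma|(u+v[u])$ are both non-negative.

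The next step is to understand the graph of $v[\cdot]$. Differentiating, $v'[u]=0$ is equivalent to the quadratic \eqref{eq:v-prime}, and under \eqref{eq:ass1} and \eqref{eq:ass2} (the content of \eqref{cdt:1} and the first bound in \eqref{cdt:4}) this quadratic has a unique positive root $u_0$, which is a maximum, so $v_0:=v[u_0]=\max_{u>0}v[u]$ and $v'[u]<0$ for $u>u_0$. I would then record the explicit bound \eqref{eq:est-u0} on $u_0$ and the two-sided bound \eqref{eq:upper-v0}, \eqref{eq:lower-v0} on $v_0$; inserting \eqref{eq:ass3} and \eqref{eq:ass4} (the content of \eqref{cdt:2} and the second bound in \eqref{cdt:4}, recalling $m=V_0/(c|\Gamma|)$) then yields the key smallness estimate $u_0+v_0<\tfrac12\min\{1,m\}$.

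With this smallness at hand I would evaluate $\Phi$ at the endpoints of a suitable interval. At $u_0$: since $u_0+v_0<\tfrac12\min\{1,m\}$ one gets $1-(u_0+v_0)>\tfrac12$ and $V_0-c|\Gamma|(u_0+v_0)>\tfrac12 V_0$, hence $\Phi(u_0)>\tfrac{a_6}{4}V_0-a_{-6}v_0$, and combining with the bounds \eqref{eq:upper-v0}, \eqref{eq:lower-v0} on $v_0$ and assumption \eqref{eq:ass5} (equivalently \eqref{cdt:3}) forces $\Phi(u_0)>0$. For the right endpoint, note that $v[\cdot]$ stays bounded (indeed $v[u]\to a_4/a_3$ as $u\to\infty$), so $u+v[u]\to\infty$; since $u_0+v_0<\min\{1,m\}$, there is a smallest $u_1>u_0$ with $u_1+v[u_1]=\min\{1,m\}$, on $[u_0,u_1]$ one still has $\Phi=g_0(\cdot,v[\cdot])$, and at $u_1$ one of the factors $V_0-c|\Gamma|(u_1+v[u_1])$ (if $m\le1$) or $1-(u_1+v[u_1])$ (if $m\ge1$) vanishes, giving $\Phi(u_1)=-a_{-6}v[u_1]<0$. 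Continuity of $\Phi$ and the intermediate value theorem then produce $u_*\in(u_0,u_1)$ with $\Phi(u_*)=0$; setting $v_*:=v[u_*]$ we obtain $f(u_*,v_*)=0$, $g_0(u_*,v_*)=0$, $u_*,v_*\ge0$ and $u_*+v_*<\min\{1,m\}$, so $(u_*,v_*)\in\Aset$ is the desired stationary homogeneous solution.

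I expect the real work --- and essentially the only place that can go wrong --- to be purely quantitative: the numerical thresholds in \eqref{cdt:1}--\eqref{cdt:4} (the factors $\tfrac14$, $2^{-8}$) have to be tuned precisely so that the chain ``$v'[u]=0$ has a unique maximum $\Rightarrow$ $u_0$ and $v_0$ are controlled $\Rightarrow$ $u_0+v_0<\tfrac12\min\{1,m\}$ $\Rightarrow$ $\Phi(u_0)>0$'' actually closes, which demands a careful bookkeeping of how \eqref{eq:est-u0}--\eqref{eq:lower-v0} depend on every single parameter. A secondary but necessary point is to remain, throughout the interval $[u_0,u_1]$, inside the region where the positive part in $q_0$ and the non-local factor are non-negative, so that the scalar function $\Phi$ faithfully encodes the vanishing of $g_0$ along the branch $v=v[u]$.
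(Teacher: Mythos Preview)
Your proposal is correct and follows essentially the same route as the paper: solve $f=0$ for $v=v[u]$, locate the unique maximum $u_0$ of $v[\cdot]$ under \eqref{cdt:1} and the first half of \eqref{cdt:4}, use \eqref{cdt:2} and the second half of \eqref{cdt:4} to force $u_0+v_0<\tfrac12\min\{1,m\}$, then apply the intermediate value theorem to $\Phi$ between $u_0$ (where \eqref{cdt:3} makes $\Phi>0$) and the point $u_1$ where $u+v[u]$ hits $\min\{1,m\}$. Your write-up is in fact slightly more careful than the paper's in two places: you explicitly justify the existence of $u_1$ via $u+v[u]\to\infty$, and you flag that on $[u_0,u_1]$ the positive part in $q_0$ is inactive so that $\Phi$ genuinely equals $g_0(\cdot,v[\cdot])$.
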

The stationary point $(u_*,v_*)$ is under suitable assumptions on the data linearly stable against spatially homogeneous perturbations. For a brief summary of the classical stability analysis and of the Turing mechanism for two-variable reaction--diffusion systems we refer to Appendix \ref{app:turing}.
\begin{proposition}\label{prop:stab}
Assume that \eqref{cdt:1}-\eqref{cdt:4} hold and that moreover
\begin{align}
	2a_4(a_2-a_5)\,&<\, a_3a_5^2, \label{cdt:5}\\
	a_{-6}\,&<\, a_6 c|\Gamma||1-m| \label{cdt:6}
\end{align}
are satisfied. Then $(u_*,v_*)$ is a stable stationary point of \eqref{eq:ode-u}, \eqref{eq:ode-v}. This system is in $(u_*,v_*)$ of activator--substrate-depletion type, where $u$ acts as an activator and $v$ as substrate.
\end{proposition}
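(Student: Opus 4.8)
The plan is to carry out the classical linear stability analysis for the spatially homogeneous reaction system recalled in Appendix~\ref{app:turing}: compute the Jacobian $J_0$ of $(f,g_0)$ at $(u_*,v_*)$, read off its sign pattern, and verify $\operatorname{tr}J_0<0$ and $\det J_0>0$. A preliminary point is that $(u_*,v_*)$ lies in the \emph{interior} of $\Aset$: writing $s_*:=u_*+v_*$ and using $V_0=c|\Gamma|m$, equation \eqref{eq:g=0} reads $a_6c|\Gamma|(m-s_*)(1-s_*)=a_{-6}v_*>0$, and since $s_*\le\min\{1,m\}$ forces $m-s_*\ge0$ and $1-s_*\ge0$, both factors must in fact be strictly positive, so $s_*<\min\{1,m\}\le1$. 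Hence the cut-off satisfies $(1-(u+v))_+=1-(u+v)$ near $(u_*,v_*)$, $q_0$ is smooth there, and at the equilibrium
\begin{gather*}
	\partial_u q_0=a_6c|\Gamma|(2s_*-1-m)<0,\qquad \partial_v q_0=\partial_u q_0-a_{-6}<0,
\end{gather*}
using $2s_*<2\min\{1,m\}\le1+m$. Since $g_0=-f+q_0$ this gives
\begin{gather*}
	J_0=\begin{pmatrix}f_u & f_v\\ -f_u+\partial_u q_0 & -f_v+\partial_v q_0\end{pmatrix},
\end{gather*}
all partials evaluated at $(u_*,v_*)$.

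For the sign structure I would argue as follows. From \eqref{cdt:1} and \eqref{cdt:4} one has $a_1<a_3$, so $f_v=a_1+(a_3-a_1)\frac{u_*}{a_2+u_*}>0$. Differentiating the identity $f(u,v[u])\equiv0$ -- valid by the very definition \eqref{eq:def-vu} of $v[\cdot]$ -- yields $f_u=-f_v\,v'[u_*]$, and since $u_*>u_0$ (from the proof of Proposition~\ref{prop:stat-sol}) we have $v'[u_*]<0$ by \eqref{eq:est-v-prime}; hence $f_u>0$. Combined with $\partial_u q_0<0$ and $\partial_v q_0<0$ this gives $g_u=-f_u+\partial_u q_0<0$ and $g_v=-f_v+\partial_v q_0<0$, i.e.\ $J_0$ has the pattern $f_u,f_v>0$, $g_u,g_v<0$ characteristic of an activator--substrate-depletion system, with $u$ the activator and $v$ the substrate.

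It then remains to verify $\operatorname{tr}J_0=f_u-f_v+\partial_v q_0<0$ and, using $\partial_v q_0=\partial_u q_0-a_{-6}$, $\det J_0=f_u\partial_v q_0-f_v\partial_u q_0=(f_v-f_u)\,|\partial_u q_0|-a_{-6}f_u>0$. Both follow from two estimates: (i)~$f_u<\tfrac12 f_v$, and (ii)~$|\partial_u q_0|>a_{-6}$. Indeed (i) gives $\operatorname{tr}J_0<-\tfrac12 f_v+\partial_v q_0<0$ and $f_v-f_u>\tfrac12 f_v>f_u$, so $\det J_0>f_u\,|\partial_u q_0|-a_{-6}f_u=f_u(|\partial_u q_0|-a_{-6})>0$ by (ii). For (i), I would use that, by \eqref{eq:v-prime} and a short computation,
\begin{gather*}
	v'[u]=\frac{a_4\,Q(u)}{(a_5+u)^2(a_1a_2+a_3u)^2},\qquad Q(u)=\big(a_1a_2-a_3(a_2-a_5)\big)u^2+2a_1a_2a_5u+a_1a_2^2a_5,
\end{gather*}
so that for $u>u_0$, where $Q(u)<0$, discarding the negative lower-order terms of $-Q(u)$ and estimating $(a_1a_2+a_3u)^2>a_3^2u^2$ and $(a_5+u)^2\ge a_5^2$ gives $-v'[u]<\frac{a_4(a_2-a_5)}{a_3a_5^2}<\tfrac12$ by \eqref{cdt:5}, whence $f_u=-f_v v'[u_*]<\tfrac12 f_v$. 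For (ii), I would write $|\partial_u q_0|=a_6c|\Gamma|\big((m-s_*)+(1-s_*)\big)$ and note that $s_*<\min\{1,m\}$ forces $(m-s_*)+(1-s_*)>|1-m|$, whence $|\partial_u q_0|>a_6c|\Gamma|\,|1-m|>a_{-6}$ by \eqref{cdt:6}. Negative trace and positive determinant then give linear (hence asymptotic) stability of $(u_*,v_*)$ for \eqref{eq:ode-u},\eqref{eq:ode-v}, which together with the sign pattern above is the assertion.

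I expect the only real work to lie in (i): extracting the clean constant $\tfrac12$ from \eqref{cdt:5} requires the factorisation of $v'[u]$ and a somewhat fiddly -- though entirely elementary -- estimate of $Q(u)$ and of the denominator, and one must also confirm, as above, that the equilibrium sits strictly inside $\Aset$, which is precisely what makes the cut-off differentiable and $\partial_u q_0$ strictly negative. The remaining trace/determinant arithmetic, and the identification of the sign pattern, are routine.
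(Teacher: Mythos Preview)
Your proof is correct. The sign-pattern argument and the use of \eqref{cdt:6} via $1+m-2s_*>|1-m|$ match the paper exactly. The main difference is in how you control $\partial_u f$: the paper computes it directly from the explicit formula for $f$, obtaining the $u$-dependent bound $\partial_u f \le \frac{a_4(a_2-a_5)u_*}{(a_2+u_*)(a_5+u_*)^2}$, and then compares this termwise against $\partial_v f \ge \frac{a_3 u_*}{a_2+u_*}$ separately inside the trace and determinant estimates. You instead exploit the implicit relation $\partial_u f = -\partial_v f\cdot v'[u_*]$ and bound $-v'[u_*]$ by the single constant $\frac{a_4(a_2-a_5)}{a_3 a_5^2}<\tfrac12$; this packages everything into the two clean facts (i) $f_u<\tfrac12 f_v$ and (ii) $|\partial_u q_0|>a_{-6}$, from which both \eqref{eq:rd-Tu1} and \eqref{eq:rd-Tu2} follow in one line each. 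Your decomposition is tidier and makes the role of \eqref{cdt:5} completely transparent --- it is precisely the statement that $-v'$ is bounded by $\tfrac12$ --- while the paper keeps the $u$-dependence visible a bit longer; the two arguments are equivalent in substance.
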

\begin{proof}
We show that the stability conditions \eqref{eq:rd-Tu1}, \eqref{eq:rd-Tu2} are satisfied. 
We first observe that since $a_1<\frac{a_3}{2}$ by \eqref{cdt:4}
\begin{align}
	\partial_v f(u,v)\,&=\, a_1 + (a_3 - a_1)\frac{u}{a_2 + u} \label{eq:pd-f-v}\\
	&>\, 0\quad\text{ for all }u>0. \label{eq:del-v-f}
\end{align}
For the function $v[\cdot]$ defined in \eqref{eq:def-vu} we have $f(u,v[u])=0$. This yields
\begin{gather*}
	0\,=\,\partial_u f(u,v[u])\,=\, (\partial_u f)(u,v[u]) + (\partial_v f)(u,v[u])v'[u].
\end{gather*}
Since $u_*>u_0$ we deduce from \eqref{eq:est-v-prime} that $v'[u_*]<0$ and obtain
\begin{gather}
	\partial_u f(u_*,v_*)\,=\, - \partial_vf(u_*,v_*)v'[u_*]\,>\,0. \label{eq:del-u-f}
\end{gather}
Furthermore we have
\begin{align}
	\partial_u q_0(u,v)\,&=\, -a_6 c|\Gamma|\big(1+m-2(u+v)\big)\,<\,0,\label{eq:del-u-q0}\\
	\partial_v q_0(u,v)\,&=\, -a_6 c|\Gamma|\big(1+m-2(u+v)\big)-a_{-6}\,<\,0. \label{eq:del-v-q0}
\end{align}
By \eqref{eq:del-v-f}-\eqref{eq:del-v-q0} the stationary point $(u_*,v_*)$ is of activator--substrate-depletion type.
To check the criteria for Turing type instabilities we need to estimate combinations of derivatives. We first compute
\begin{gather*}
	\partial_uf \,=\, \frac{a_2(a_3-a_1)}{(a_2+u)^2}v - \frac{a_4a_5}{(a_5+u)^2}.
\end{gather*}
Evaluating this expression at $(u,v)=(u_*,v_*)$ and using \eqref{eq:def-vu} we deduce
\begin{align}
	\partial_uf \,&=\, \frac{a_2(a_3-a_1)a_4 u}{(a_2+u)(a_5+u)(a_1a_2+a_3u)}- \frac{a_4a_5}{(a_5+u)^2}\notag\\
		&\leq\, \frac{a_2 a_3a_4}{(a_2+u)(a_5+u)a_3}- \frac{a_4a_5}{(a_5+u)^2} \notag\\
		&=\, \frac{a_4(a_2 -a_5)u}{(a_2+u)(a_5+u)^2}. \label{eq:pd-f-u}
\end{align}
We thus obtain in $(u,v)=(u_*,v_*)$ that
\begin{align}
	\partial_uf + \partial_v g_0 \,\leq\,&   \frac{a_4(a_2 -a_5)u}{(a_2+u)(a_5+u)^2} -\Big(a_1 + (a_3 - a_1)\frac{u}{a_2 + u}\Big) \notag\\
	&-a_6 c|\Gamma|\big(1+m-2(u+v)\big)-a_{-6}\notag\\
	\leq\,& \frac{a_4(a_2 -a_5)u}{(a_2+u)(a_5+u)^2} -a_3\frac{u}{a_2 + u}\,
	<\, 0 \label{eq:turing-cdt1}
\end{align}
by \eqref{cdt:5}. This verifies condition \eqref{eq:rd-Tu1}. We next estimate in $(u,v)=(u_*,v_*)$ 
\begin{align}
	&\partial_uf\partial_vg_0 - \partial_vf\partial_ug_0 \notag\\
	=\, &\partial_uf(-\partial_vf + \partial_vq_0) - \partial_vf(-\partial_uf + \partial_uq_0)\notag\\
	=\, & \partial_uq_0(\partial_uf-\partial_vf)-a_{-6}\partial_uf\notag\\
	\geq\,& -a_6 c|\Gamma|\big(1+m-2(u+v)\big)\Big(\frac{a_4(a_2 -a_5)u}{(a_2+u)(a_5+u)^2} - a_3\frac{u}{a_2 + u}\Big)  - a_{-6}\frac{a_4(a_2 -a_5)u}{(a_2+u)(a_5+u)^2}\notag\\
	=\,& -\frac{u}{(a_2+u)(a_5+u)^2}\Big(a_6 c|\Gamma|\big(1+m-2(u+v)\big)\big(a_4(a_2 -a_5)-a_3(a_5+u)^2\big)+a_{-6}a_4(a_2 -a_5)\Big) \label{eq:Tu-hilf}
\end{align}
By \eqref{cdt:5} and since $1+m-2(u+v)\geq |1-m|$ the term in the brackets in the last line is estimated by
\begin{align*}
	&\,a_6 c|\Gamma|\big(1+m-2(u+v)\big)\big(a_4(a_2 -a_5)-a_3(a_5+u)^2\big)+a_{-6}a_4(a_2 -a_5)\\
	\leq\,& -a_6 c|\Gamma||1-m|a_4(a_2 -a_5)+a_{-6}a_4(a_2 -a_5)\,<\, 0,
\end{align*}
where the last estimate follows from \eqref{cdt:6}. Together with \eqref{eq:Tu-hilf} the last equation implies
\begin{gather*}
	\partial_uf\partial_vg_0 - \partial_vf\partial_ug_0 \,>\,0,
\end{gather*}
and therefore \eqref{eq:rd-Tu2} holds. Thus the ODE system is linearly stable.
\end{proof}
We next evaluate the response of the full reaction--diffusion system to perturbations of the spatially homogeneous solution $(u_*,v_*)$ in direction of arbitrary smooth functions $\varphi,\psi:\Gamma\times(0,T)\to\R$. In particular we have to linearize the non-local functional $V=V[u+v]$ that occurs in the source term $q$ in \eqref{eq:v}. With this aim we consider a variation $(u_s,v_s)$ of $(u_*,v_*)$ in direction of $(\varphi,\psi)$,
\begin{gather*}
	u_s,v_s \,:\, \Gamma\times (0,T)\,\to\,\R,\quad  s\in (-1,1),\\
	u_s\big|_{s=0}\,=\, u_*,\quad v_s\big|_{s=0}\,=\, v_*,\quad \frac{\partial}{\partial s}\Big|_{s=0}u_s\,=\, \varphi, \quad \frac{\partial}{\partial s}\Big|_{s=0}v_s\,=\,\psi.
\end{gather*}
The corresponding linearization of $V$ is then given by
\begin{align}
	\frac{d}{ds}\big|_{s=0} V[u_s+v_s]\,=\, -c\frac{d}{ds}\big|_{s=0} \int_\Gamma (u+v)\,\dgxi\,=\, -c\int_{\Gamma} (\varphi + \psi) \dgxi. \label{eq:lin-V}
\end{align}
For the linearization of \eqref{eq:u}, \eqref{eq:v} we therefore obtain
\begin{align}
	\partial_t \varphi \,&=\, \Delta_\Gamma \varphi + \gamma \partial_u f(u_*,v_*)\varphi + \gamma\partial_vf(u_*,v_*)\psi, \label{eq:u-lin}\\
	\partial_t \psi \,&=\, d\Delta_\Gamma \psi + \gamma \big(-\partial_u f(u_*,v_*)\varphi -\partial_v f(u_*,v_*) \psi +\partial_1q(u_*+v_*,v_*,V_*)(\varphi+\psi)\big)+ \label{eq:v-lin}\\
	&\qquad \qquad + \gamma\Big(\partial_2q(u_*+v_*,v_*,V_*)\psi- c \partial_3q(u_*+v_*,v_*,V_*)\int_{\Gamma} (\varphi + \psi) \dgxi\Big). \notag
\end{align}
We next decompose the direction of perturbation $(\varphi,\psi)$ in $L^2(\Gamma)$ in a part that is spatially homogeneous and a part that is orthogonal to the constants. Since spatially homogeneous perturbations have already been analyzed in Proposition \ref{prop:stab} it suffices to assume that we have
\begin{gather*}
	\int_\Gamma \varphi \dg\,=\, \int_\Gamma \psi\dg \,=\, 0.
\end{gather*}
Equation \eqref{eq:lin-V} shows that for such directions $V$ is unchanged to first order. Therefore, with respect to variations in direction of functions that are orthogonal to the constants, the linearizations of \eqref{eq:u}, \eqref{eq:v} coincide with that of the system
\begin{align}
	\partial_t u \,&=\, \Delta_\Gamma u + \gamma f(u,v), \label{eq:u-het}\\
	\partial_t v \,&=\, d\Delta_\Gamma v + \gamma \left(-f(u,v) + q_1(u+v,v)\right) \label{eq:v-het}
\end{align}
in $(u_*,v_*)$, where
\begin{align}
	q_1(u+v,v)\,&=\, q(u+v,v,V_*)\,=\, a_6 V_* (1 - (u + v)) - a_{-6} v, \label{eq:def-q1}
\end{align}
$V_*=V[u_*+v_*]$
For convenience we define
\begin{gather*}
	g_1(u,v)\,=\, -f(u,v) + q_1(u+v,v).
\end{gather*}
Thus we see that the non-local term in the full system \eqref{eq:u}, \eqref{eq:v} leads to a difference in the linearization with respect to homogeneous or heterogeneous perturbations, that can be understood as a change (from $q_0$ to $q_1$) in the source term. Below we show that a range of parameters exists where $(u_*,v_*)$ is an unstable stationary state. We first need some estimates on $u_*,v_*$ to prepare the proof.
\begin{lemma}\label{lem:estimates}
Assume \eqref{cdt:1}, \eqref{cdt:2}, \eqref{cdt:6}, and
\begin{gather}
	a_1a_2\,<\, \frac{a_3}{1+a_3^2}. \label{cdt:7}
\end{gather}
Then
\begin{align}
	v_*\,&<\, \frac{a_4 a_2}{a_3a_5}, \label{eq:upper-v-star}\\
	u_*\,&>\, \frac{1}{2}\min\{1,m\}, \label{eq:lower-u-star}\\
	v_*\,&>\, \frac{a_4(a_2+1)(a_3^2+1)\min\{1,m\}}{2(a_5+1)(a_3^2+2)a_3}, \label{eq:lower-v-star}
\end{align}
holds.
\end{lemma}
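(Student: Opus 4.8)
The plan is to prove the three bounds \eqref{eq:upper-v-star}--\eqref{eq:lower-v-star} separately, starting from the description of the stationary point produced in the proof of Proposition~\ref{prop:stat-sol}: one has $v_* = v[u_*]$ with $v[\cdot]$ as in \eqref{eq:def-vu}, and $u_0 < u_* < u_1$, where $u_0$ is the unique positive critical point of $v[\cdot]$ and $u_1 + v[u_1] = \min\{1,m\}$; moreover $u_* + v_* < \min\{1,m\}$ and $\Phi(u_*) = 0$. The bound \eqref{eq:upper-v-star} is then immediate: since $u_* > u_0$, relation \eqref{eq:est-v-prime} shows that $v[\cdot]$ is strictly decreasing on $(u_0,\infty)$, so $v_* < v[u_0] = v_0$, and \eqref{eq:upper-v0} gives $v_0 \le \frac{a_2 a_4}{a_3 a_5}$; combined with \eqref{cdt:2} this also records the consequence $v_* < \tfrac14\min\{1,m\}$, which is used in the next step.

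For \eqref{eq:lower-u-star} I would return to the non-local balance $\Phi(u_*) = 0$. Writing $w := u_* + v_* < \min\{1,m\}$ and using $V_0 = c|\Gamma| m$, this reads $a_6 c|\Gamma|(m - w)(1 - w) = a_{-6} v_*$, with both factors on the left positive, and \eqref{cdt:6} turns it into the strict inequality $(m - w)(1 - w) < |1 - m|\, v_*$. A case distinction on the sign of $1 - m$ finishes the argument (the case $m = 1$ is excluded by \eqref{cdt:6}): if $m < 1$ then $1 - w > 1 - m$, and dividing by $m - w > 0$ gives $m - w < v_*$; if $m > 1$ then $m - w > m - 1$, and dividing by $m - 1 > 0$ gives $1 - w < v_*$. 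In both cases $w > \min\{1,m\} - v_*$, so with $v_* < \tfrac14\min\{1,m\}$ we obtain $w > \tfrac34\min\{1,m\}$ and hence $u_* = w - v_* > \tfrac12\min\{1,m\}$.

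For \eqref{eq:lower-v-star} I would factor $v[u] = a_4\,\frac{u}{a_5 + u}\cdot\frac{a_2 + u}{a_1 a_2 + a_3 u}$ and bound each factor from below on the interval $\big(\tfrac12\min\{1,m\},\,\min\{1,m\}\big)$ containing $u_*$. The first factor is increasing in $u$, so it is at least $\frac{\min\{1,m\}}{2a_5 + \min\{1,m\}} \ge \frac{\min\{1,m\}}{2(a_5+1)}$, using $\min\{1,m\} \le 1$. Because $a_1 < a_3$ (a consequence of \eqref{cdt:4}) the second factor is decreasing in $u$, so it is at least $\frac{a_2 + 1}{a_1 a_2 + a_3}$; now \eqref{cdt:7} yields $a_1 a_2 + a_3 < a_3\frac{a_3^2 + 2}{a_3^2 + 1}$, so the second factor is larger than $\frac{(a_2 + 1)(a_3^2 + 1)}{a_3(a_3^2 + 2)}$. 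Multiplying these two estimates by $a_4$ gives exactly \eqref{eq:lower-v-star}.

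The \emph{main obstacle} is \eqref{eq:lower-u-star}: the other two bounds reduce to the monotonicity of $v[\cdot]$ past $u_0$ and elementary rational-function estimates, whereas the lower bound on $u_*$ forces one back to the non-local relation $\Phi(u_*) = 0$ and a careful split according to whether $m < 1$ or $m > 1$ (i.e.\ whether detachment or attachment dominates), which is precisely where condition \eqref{cdt:6} enters. A smaller point to watch in \eqref{eq:lower-v-star} is the justification of $a_1 < a_3$, so that the second factor is monotone in the favourable direction; and throughout one should keep in mind that, although the lemma lists only \eqref{cdt:1}, \eqref{cdt:2}, \eqref{cdt:6}, \eqref{cdt:7}, the argument tacitly uses the full hypotheses \eqref{cdt:1}--\eqref{cdt:4} of Proposition~\ref{prop:stat-sol}, which are what guarantee the existence of $(u_*,v_*)$ with $u_0 < u_* < u_1$.
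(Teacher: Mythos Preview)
Your argument is correct and, for \eqref{eq:lower-u-star}, actually more elegant than the paper's. The paper parametrises the balance $q_0=0$ as $u=u[v]$, notes monotonicity, substitutes $v_1=\tfrac14\min\{1,m\}$, and then solves the resulting quadratic for $u[v_1]$ explicitly, estimating the root with \eqref{cdt:6} to get $u[v_1]\ge\tfrac12\min\{1,m\}$. Your case split on $m\lessgtr 1$ replaces that explicit root computation by a two-line inequality and reaches the same conclusion; one small slip: in the case $m<1$ the cancellation is by $1-m$ (after using $(m-w)(1-w)>(m-w)(1-m)$), not by $m-w$ as you wrote. For \eqref{eq:lower-v-star} the paper groups the factors as $\frac{a_2+u}{a_5+u}\cdot\frac{u}{a_1a_2+a_3u}$ and uses \eqref{cdt:1} for the first factor, keeping the $u$ until the very last step; your grouping $\frac{u}{a_5+u}\cdot\frac{a_2+u}{a_1a_2+a_3u}$ instead uses monotonicity of the second factor via $a_1<a_3$, which, as you correctly flag, needs \eqref{cdt:4} (the paper's proof avoids this extra call, though of course \eqref{cdt:4} is already implicit in the existence of $(u_*,v_*)$). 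Both factorings land on the identical bound. For \eqref{eq:upper-v-star} your argument coincides with the paper's.
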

\begin{proof}
By \eqref{eq:upper-v0} and $v_*<v_0$ we deduce \eqref{eq:upper-v-star}. From $q_0(u_*,v_*)=0$ we obtain that $(u_*,v_*)$ is a solution of
\begin{gather}
	0\,=\, a_6c|\Gamma|\big(m-(u+v)\big)\big(1-(u+v)\big) - a_{-6}v. \label{eq:def-uv}
\end{gather}
If we denote by $u[v]\in (0,\min\{1,m\})$ the solution of \eqref{eq:def-uv} for given $0<v<\min\{1,m\}$ we observe that $u$ is decreasing in $v$. By \eqref{cdt:2},\eqref{eq:upper-v-star} we have $v_*<v_1:=\frac{1}{4}\min\{1,m\}$ and therefore  $u_*> u_1$, $u_1:=u[v_1]$. By \eqref{eq:def-uv} and \eqref{cdt:6} we get
\begin{align*}
	u_1\,&=\, \frac{1}{2}(m+1-2v_1) - \sqrt{\frac{1}{4}(m+1-2v_1)^2 -(m-v_1)(1-v_1)+ \frac{a_{-6}v_1}{a_6c|\Gamma|}}\\
	&\geq\, \frac{1}{4}(2\max\{1,m\}+\min\{1,m\}) - \sqrt{\frac{1}{4}(m-1)^2 + \frac{1}{4}|1-m|\min\{1,m\}} \\
	&=\, \frac{1}{2}\min\{1,m\},
\end{align*}
which proves \eqref{eq:lower-u-star}.
Next we obtain from $f(u,v)=0$ for $(u,v)=(u_*,v_*)$ that
\begin{align*}
	v\,=\, \frac{a_4u(a_2+u)}{(a_5+u)(a_1a_2+a_3u)}\,\geq\, \frac{a_4(a_2+1)u}{(a_5+1)(a_1a_2+a_3u)}\,\geq\, \frac{a_4(a_2+1)(a_3^2+1)u}{(a_5+1)(a_3^2+2)a_3},
\end{align*}
where we have used \eqref{cdt:1} and \eqref{cdt:7}. By \eqref{eq:lower-u-star} this yields \eqref{eq:lower-v-star}.
\end{proof}
\begin{theorem}\label{thm:turing}
Let $(u_*,v_*)$ be  the stationary state found in Proposition \ref{prop:stat-sol} and let the parameters in \eqref{eq:u}, \eqref{eq:v} satisfy all the conditions \eqref{cdt:1}-\eqref{cdt:6}. If in addition
\begin{align}
	a_1\,&<\, \min\Big\{\frac{\min\{1,m\}a_3}{2a_2(a_3^2+1)}\,,\, \frac{\min\{1,m\}^2a_3(a_2-a_5)}{4a_2(a_2+1)^2(a_3^2+1)}\Big\}, \label{cdt:8}\\
	d\,&>\,	\frac{2\big(a_3(a_3^2+2)+(a_2+1)(a_3^2+1)(a_6V_0 +a_{-6})\big)(a_3^2+2)(a_5+1)^2}{\min\{1,m\}a_3^2a_4(a_2-a_5)(a_3^2+1)},\label{cdt:d1}\\
	d\,&>\,  \frac{4(a_3^2+2)^2(a_5+1)^2\big(a_3^2a_4(a_2-a_5)\min\{1,m\}+4(a_3^2+2)a_6V_0(a_2+1)(a_5+1)^2\big)}{a_3^3(a_3^2+1)a_4^2(a_2-a_5)^2\min\{1,m\}^2}\label{cdt:d2}
\end{align}
then there exists $\gamma>0$ such that $(u_*,v_*)$ is linearly unstable.
\end{theorem}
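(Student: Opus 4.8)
The plan is to run the classical diffusion-driven-instability analysis on the reduced local system \eqref{eq:u-het}, \eqref{eq:v-het} and then use the freedom in $\gamma$ to place an eigenvalue of $-\SL$ into the resulting band of unstable wavenumbers. First I would invoke the reduction made just before the theorem: a perturbation $(\varphi,\psi)$ of $(u_*,v_*)$ with $\int_\Gamma\varphi\dg=\int_\Gamma\psi\dg=0$ leaves $V$ unchanged to first order by \eqref{eq:lin-V}, so on such directions the linearization of \eqref{eq:u}, \eqref{eq:v} at $(u_*,v_*)$ coincides with that of \eqref{eq:u-het}, \eqref{eq:v-het}, while homogeneous directions are covered by Proposition \ref{prop:stab}. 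Let $0=\lambda_0<\lambda_1\le\lambda_2\le\cdots$ be the eigenvalues of $-\SL$ on the closed surface $\Gamma$, with an $L^2(\Gamma)$-orthonormal eigenbasis. Expanding a mean-zero perturbation in this basis, the amplitude of the $k$-th mode ($k\ge1$) solves $\dot w=M_k w$ with
\[
  M_k\,=\,-\lambda_k\,\mathrm{diag}(1,d)+\gamma J,\qquad
  J\,=\,\begin{pmatrix}\partial_uf&\partial_vf\\\partial_ug_1&\partial_vg_1\end{pmatrix}(u_*,v_*),
\]
where $g_1=-f+q_1$ and $q_1(u+v,v)=a_6V_*(1-(u+v))-a_{-6}v$, $V_*=V[u_*+v_*]$.

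Set $A:=\partial_uf(u_*,v_*)$ and $B:=\partial_vf(u_*,v_*)$, so that $\partial_ug_1=-A-a_6V_*$ and $\partial_vg_1=-B-a_6V_*-a_{-6}$. Arguing as in \eqref{eq:turing-cdt1} (using \eqref{eq:pd-f-u}, the bound $B\ge a_3u_*/(a_2+u_*)$ and \eqref{cdt:5}) one gets $A<B$, hence $\operatorname{tr}J=A+\partial_vg_1<0$ and $\operatorname{tr}M_k<0$ for every $k\ge1$. A real $2\times2$ matrix with negative trace has an eigenvalue of positive real part exactly when its determinant is negative, so $(u_*,v_*)$ is linearly unstable if and only if
\[
  h(\lambda_k)\,<\,0\quad\text{for some }k\ge1,\qquad
  h(\lambda):=d\lambda^2-\gamma\big(dA+\partial_vg_1\big)\lambda+\gamma^2\delta,\quad
  \delta:=\partial_uf\,\partial_vg_1-\partial_vf\,\partial_ug_1\,=\,a_6V_*(B-A)-a_{-6}A.
\]
This $h$ is precisely the dispersion relation underlying the diffusion-driven-instability criterion recalled in Appendix \ref{app:turing}.

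The heart of the matter — and the step I expect to be the main obstacle — is quantitative control of the entries of $J$ at $(u_*,v_*)$. The bounds $B\le a_3$, $A\le a_4(a_2-a_5)u_*/\big((a_2+u_*)(a_5+u_*)^2\big)$ (which is \eqref{eq:pd-f-u}), $V_*\le V_0$, and $\tfrac12\min\{1,m\}<u_*<1$, together with the bounds on $v_*$ and their $a_1$-refinements from Lemma \ref{lem:estimates} and \eqref{cdt:7}, are straightforward. What is delicate is an \emph{explicit positive lower bound} for $A$: since
\[
  A\,=\,\frac{a_4}{a_5+u_*}\Big(\frac{a_2(a_3-a_1)u_*}{(a_2+u_*)(a_1a_2+a_3u_*)}-\frac{a_5}{a_5+u_*}\Big)
\]
is a difference of competing positive terms, I would use the smallness of $a_1$ forced by \eqref{cdt:7}, \eqref{cdt:8} to bound the first fraction below by a fixed fraction of $a_2/(a_2+u_*)$, so that the bracket is bounded below by a positive multiple of $u_*(a_2-a_5)/\big((a_2+u_*)(a_5+u_*)\big)$; feeding in $\tfrac12\min\{1,m\}<u_*<1$ then yields $A\ge c_0\,a_4(a_2-a_5)\min\{1,m\}/\big((a_2+1)(a_5+1)^2\big)$ with an explicit constant $c_0>0$. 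This is genuinely stronger than the mere sign $A>0$ that sufficed in Proposition \ref{prop:stab}, and it is precisely for this estimate that the hierarchy of smallness hypotheses \eqref{cdt:1}--\eqref{cdt:8} and the exact form of the thresholds \eqref{cdt:d1}, \eqref{cdt:d2} are needed.

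With these bounds in hand, condition \eqref{cdt:d1} is arranged so that $dA+\partial_vg_1=dA-B-a_6V_*-a_{-6}>0$ (bound the negative part above via $B\le a_3$ — refined using $a_1$-smallness — and $V_*\le V_0$, and $dA$ below via the preceding paragraph), and condition \eqref{cdt:d2} is arranged so that the discriminant is positive, $\big(dA+\partial_vg_1\big)^2-4d\delta>0$ (bound $\delta\le a_6V_0B$ above and $\big(dA+\partial_vg_1\big)^2$ below through the lower bound on $A$; note that $\delta$ itself need not be positive, only the discriminant enters). Then $h<0$ precisely on the interval $(\Lambda_-,\Lambda_+)$ with $\Lambda_\pm=\frac{\gamma}{2d}\big((dA+\partial_vg_1)\pm\sqrt{(dA+\partial_vg_1)^2-4d\delta}\,\big)$, where $\Lambda_+>0$ and $\Lambda_+-\Lambda_-=\frac{\gamma}{d}\sqrt{(dA+\partial_vg_1)^2-4d\delta}>0$, both growing linearly in $\gamma$. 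Finally I would invoke Weyl's law for $-\SL$ on the closed surface $\Gamma$, $\#\{k:\lambda_k\le\Lambda\}\sim\frac{|\Gamma|}{4\pi}\Lambda$ as $\Lambda\to\infty$: since $\Lambda_+-\Lambda_-$ is of order $\gamma$, the number of $k\ge1$ with $\lambda_k\in(\Lambda_-,\Lambda_+)$ tends to $\infty$ as $\gamma\to\infty$, so for $\gamma$ large enough there is $k\ge1$ with $\det M_k=h(\lambda_k)<0$; then $M_k$ has a positive real eigenvalue and $(u_*,v_*)$ is linearly unstable. (In the case $\delta\le0$ one has $\Lambda_-\le0$, so already $\lambda_1<\Lambda_+$ works once $\gamma$ is large, with no spectral-counting input required.)
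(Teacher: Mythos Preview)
Your proposal is correct and follows essentially the same route as the paper: reduce to the local system \eqref{eq:u-het}, \eqref{eq:v-het} for mean-zero perturbations, establish the explicit lower bound on $\partial_uf(u_*,v_*)$ via the smallness hypotheses \eqref{cdt:7}, \eqref{cdt:8} together with \eqref{eq:lower-u-star}, and use it together with the upper bounds on $\partial_vf$, $\partial_uq_1$, $\partial_vq_1$ to verify \eqref{eq:rd-Tu3} from \eqref{cdt:d1} and \eqref{eq:rd-Tu4} from \eqref{cdt:d2}. The only notable differences are presentational: the paper organizes the discriminant estimate by expanding $(d\partial_uf+\partial_vg_1)^2-4d\delta$ and discarding two manifestly nonnegative terms before inserting the bounds, and it dispatches the final spectral step by citing \cite{Jost07}, whereas you invoke Weyl's law explicitly and separately treat the case $\delta\le0$; both yield the existence of $\gamma$ placing some $\lambda_k$ in the unstable band.
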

\begin{proof}
We show that the conditions \eqref{eq:rd-Tu3}, \eqref{eq:rd-Tu4} stated in Appendix \ref{app:turing} for the instability of \eqref{eq:u-het}, \eqref{eq:v-het} are satisfied. Let us start with \eqref{eq:rd-Tu3} by
estimating the different partial derivatives.
\begin{align*}
	\partial_uf(u_*,v_*)\,&=\, \frac{a_2(a_3-a_1)a_4u(a_5+u)-a_4a_5(a_1a_2+a_3u)(a_2+u)}{(a_2+u)(a_5+u)^2(a_1a_2+a_3u)}\\
	&=\, \frac{a_4\Big(a_3(a_2-a_5)u^2-a_1a_2(2a_5u+u^2+a_2a_5)\Big)}{(a_2+u)(a_5+u)^2(a_1a_2+a_3u)}.
\end{align*}
We next observe that by \eqref{cdt:1}, \eqref{cdt:8} and \eqref{eq:lower-u-star}
\begin{align}
	a_1a_2(2a_5u+u^2+a_2a_5)\,&\leq\, a_1a_2(1+a_2)^2\,\leq\, \frac{a_3}{1+a_3^2}(a_2-a_5)u^2,\notag\\
	a_1a_2 \,&\leq\, \frac{a_3}{1+a_3^2}u.\label{cdt:7a}
\end{align}
We therefore deduce that
\begin{align}
	\partial_uf(u_*,v_*)\,&\geq\, \frac{a_4(a_3-\frac{a_3}{1+a_3^2})(a_2-a_5)u^2}{(a_2+u)(a_5+u)^2(\frac{a_3}{1+a_3^2}+a_3)u}\notag\\
	&=\, \frac{a_4a_3^2(a_2-a_5)u}{(a_2+1)(a_5+1)^2(a_3^2+2)}\notag\\
	&\geq\, \frac{a_4a_3^2(a_2-a_5)\min\{1,m\}}{2(a_2+1)(a_5+1)^2(a_3^2+2)}.\label{eq:lower-dfu}
\end{align}
Next we estimate
\begin{align*}
	\partial_v g_1(u_*,v_*) \,&=\, -\Big(a_1 + (a_3 - a_1)\frac{u}{a_2 + u}\Big) -\Big(a_6 \big(V_0-c|\Gamma|(u+v)\big) +a_{-6}\Big)\\
	&\geq\, -\frac{a_1a_2+a_3u}{a_2+u} -\big(a_6V_0 +a_{-6}\big)\\
	&\geq\, -\frac{a_3(a_3^2+2)}{(a_2+1)(a_3^2+1)}-\big(a_6V_0 +a_{-6}\big),
\end{align*}	
where we have used \eqref{cdt:7a}.
Together with \eqref{eq:lower-dfu} we deduce from \eqref{cdt:d1} that \eqref{eq:rd-Tu3} holds.

Next we verify the condition \eqref{eq:rd-Tu4}, \emph{i.e.}
\begin{gather*}
	D\,:=\,(d\partial_uf+\partial_vg_1)^2 -4d(\partial_uf\partial_vg_1-\partial_vf\partial_ug_1)\,>\, 0. 
\end{gather*}
We compute 
\begin{gather*}
	\partial_uf\partial_vg_1-\partial_vf\partial_ug_1\,=\, \partial_uf\partial_vq_1 -\partial_vf\partial_uq_1
\end{gather*}
and obtain for the left hand side
\begin{align*}
	D\,&=\,d^2(\partial_uf)^2 +2d\Big(-\partial_uf(\partial_vf+\partial_vq_1)+2\partial_vf\partial_uq_1\Big) +(\partial_vg_1)^2\\
	&\geq\, d\Big[ d(\partial_uf)^2 -2\Big(\partial_uf\partial_vf-2\partial_vf\partial_uq_1\Big)\Big].
\end{align*}
Moreover
\begin{align*}
	\partial_vf\,&=\, \frac{a_1a_2+a_3u}{a_2+u}\,\leq\, \frac{a_3(a_3^2+2)u}{(a_2+u)(a_3^2+1)}\,\leq\, \frac{a_3(a_3^2+2)}{(a_2+1)(a_3^2+1)},\\
	-\partial_uq_1\,&=\,a_6V_*\,\leq\, a_6V_0.
\end{align*}
This yields
\begin{align*}
	D\,&\geq\, d(\partial_uf)^2\Big[ d - \frac{2a_3(a_3^2+2)}{(a_2+1)(a_3^2+1)(\partial_uf)} - \frac{4a_3(a_3^2+2)a_6V_0}{(a_2+1)(a_3^2+1)(\partial_uf)^2}\Big].
\end{align*}
We then deduce \eqref{eq:rd-Tu4} from \eqref{cdt:d2} and \eqref{eq:lower-dfu}. 
The conclusion now follows from  \cite{Jost07}.
\end{proof}
The above conditions ensure that perturbations with eigenvectors of the Laplace--Beltrami operator on $\Gamma$ corresponding to eigenvalues in a certain interval are unstable. As this interval scales linearly with $\gamma$ we obtain a range of values for $\gamma$ where a Turing instability exists. 

We conclude this section with some comments on the implications of the conditions derived above.
\begin{remark}
By Theorem \ref{thm:turing} parameters satisfying \eqref{cdt:1}-\eqref{cdt:6} and \eqref{cdt:8}-\eqref{cdt:d2} belong to the \emph{Turing space} where diffusive instabilities exist. Some of these conditions can be easily interpreted. The requirement $a_2>a_5$ concerns the Michaelis--Menten constants appearing in the catalyzed reactions: the affinity of GEF towards activated GTPase (forming the GEF--GTP-GTPase--effector complex) has to be higher than the affinity of GAP towards activated GTPase. Several conditions require $a_1$ to be (much) smaller than $a_3$, which means that activation by the GEF--GTP-GTPase--effector complex is stronger than activation by single GEF molecules, which in fact has been reported for example in the case of Rab5 GTPase \cite{LMRR01}. The conditions \eqref{cdt:d1}, \eqref{cdt:d2} for a Turing instability are 
most critical, as a substantially larger lateral diffusion coefficient for inactive GTPase compared to the lateral diffusion coefficient for active GTPase is required. We investigate below whether this condition is due to the particular choices of kinetic and sorption laws or rather a general feature of the kind of (reduced) model that we are considering. Finally, the condition on $\gamma$ requires a certain minimal size of the system to allow for a Turing instability. 
\end{remark}
\section{Stability analysis for equal lateral diffusion}\label{sec:no-turing}
As in most applications no substantial difference in the diffusion coefficients of the GDP-bound and GTP-bound GTPase is present we investigate in this section the possibility  of Turing pattern in \eqref{eq:u}, \eqref{eq:v} for the case $d=1$ of equal lateral diffusion. The non-locality of our model -- the remnant of the full 2D-3D coupling in our reduction -- changes the classical stability analysis. Therefore, in contrast to the classical case, Turing pattern for $d=1$ might become possible. However, we show here that in our simple model this is not the case.

The set-up in this section is as follows. We assume a system of the general form
\begin{align}
	\partial_t u \,&=\, \Delta_\Gamma u + \gamma f(u,v), \label{eq:u-gen}\\
	\partial_t v \,&=\, \Delta_\Gamma v + \gamma \left(-f(u,v) + q(u+v,v,V[u+v])\right) \label{eq:v-gen}
\end{align}
where $u,v$ denote GTP-bound and GDP-bound GTPase concentrations, respectively, and where $V[u+v]$ represents the cytosolic (inactive) GTPase concentration that is given by the mass conservation condition \eqref{eq:V}. The nonlinear terms $f,q$ account for the activation/deactivation processes and from the attachment/detachment of GTPase at the membrane. For $q$ we assume that
\begin{gather}
	\partial_1 q\,\leq\, 0,\quad \partial_2q\,\leq\, 0, \quad \partial_3q\,\geq\, 0,\label{cdt:q}
\end{gather}
which are natural condition with respect to the interpretation of $q$ as the flux induced by ad- and desorption of GTPase at the membrane.
As before, the system \eqref{eq:u}, \eqref{eq:v} has to be solved on $\Gamma\times I$.

We assume a spatially homogeneous stationary point $(u_*,v_*)$ of the ODE reduction of \eqref{eq:u-gen}, \eqref{eq:v-gen},
\begin{align}
	\partial_t u \,&=\, \gamma f(u,v), \label{eq:u-gen-ode}\\
	\partial_t v \,&=\, \gamma \left(-f(u,v) + q_0(u+v,v)\right) \label{eq:v-gen-ode}
\end{align}
where
\begin{gather*}
	q_0(u+v,v)\,=\, q(u+v,v,V_0(u+v)),\quad V_0(u+v)\,=\,V_0 -c|\Gamma|(u+v).
\end{gather*}
We set as before
\begin{gather*}
	g(u,v)\,:=\, -f(u,v) + q(u+v,v,V[u+v]),\qquad g_0(u,v)\,=\, -f(u,v) + q_0(u+v,v).
\end{gather*}
The main result of this section is the following theorem.
\begin{theorem}\label{thm:no-turing}
Assume that $(u_*,v_*)$ is of activator--substrate depletion type, that is
\begin{align}
	\partial_uf(u_*,v_*)\,&>0\, & \partial_vf(u_*,v_*)\,&>\, 0, \label{eq:acsub1}\\
	\partial_ug_0(u_*,v_*)\,&<0\, & \partial_vg_0(u_*,v_*)\,&>\, 0. \label{eq:acsub2}
\end{align}
Then no Turing type instability of \eqref{eq:u-gen}, \eqref{eq:v-gen} exists.
\end{theorem}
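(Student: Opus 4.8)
The plan is to split an arbitrary smooth perturbation of $(u_*,v_*)$ into its spatial average and its mean--zero part, exactly as in the discussion preceding \eqref{eq:u-het}, \eqref{eq:v-het}, and to control the two contributions separately. A perturbation in direction of the constants is governed by the ODE linearisation of \eqref{eq:u-gen-ode}, \eqref{eq:v-gen-ode}, i.e.\ by the Jacobian
\[
	A_0\,:=\,\begin{pmatrix} \partial_uf & \partial_vf \\ \partial_ug_0 & \partial_vg_0 \end{pmatrix}(u_*,v_*),
\]
and a Turing--type instability presupposes that this state is stable, so that $\operatorname{tr}A_0<0$ and $\det A_0>0$ (the conditions \eqref{eq:rd-Tu1}, \eqref{eq:rd-Tu2}). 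For perturbations $(\varphi,\psi)$ with $\int_\Gamma\varphi\dg=\int_\Gamma\psi\dg=0$ the non-local functional $V[u+v]$ is unchanged to first order by \eqref{eq:lin-V}, so their linearisation coincides with that of the \emph{local} system \eqref{eq:u-het}, \eqref{eq:v-het} carrying the frozen flux $q_1$, whose reaction Jacobian is
\[
	A_1\,:=\,\begin{pmatrix} \partial_uf & \partial_vf \\ \partial_ug_1 & \partial_vg_1 \end{pmatrix}(u_*,v_*).
\]
Because $d=1$, a perturbation built from an eigenfunction of $\Delta_\Gamma$ with eigenvalue $-\lambda$, $\lambda>0$, evolves according to the $2\times 2$ system $-\lambda\,\mathrm{Id}+\gamma A_1$, whose eigenvalues are those of $\gamma A_1$ shifted down by $\lambda$; hence some mean--zero mode becomes unstable, for a suitable $\gamma$, precisely when $A_1$ has an eigenvalue with positive real part, that is when $\operatorname{tr}A_1>0$ or $\det A_1<0$. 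It therefore suffices to show that the hypotheses force $\operatorname{tr}A_1\le 0$ and $\det A_1\ge 0$; then the classical dichotomy recalled in Appendix \ref{app:turing} (cf.\ \cite{Jost07}) excludes any diffusion--driven instability of \eqref{eq:u-het}, \eqref{eq:v-het}, and with it of \eqref{eq:u-gen}, \eqref{eq:v-gen}.

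The second step is the comparison of $A_0$ with $A_1$. Differentiating the definitions of $q_0$ and $q_1$ and using $V_*=V[u_*+v_*]$ one obtains
\[
	\partial_ug_1-\partial_ug_0\,=\,\partial_vg_1-\partial_vg_0\,=\,c|\Gamma|\,\partial_3q(u_*+v_*,v_*,V_*)\,=:\,\mu\,\ge\,0 ,
\]
so that $A_1=A_0+\mu\left(\begin{smallmatrix} 0 & 0 \\ 1 & 1 \end{smallmatrix}\right)$, $\operatorname{tr}A_1=\operatorname{tr}A_0+\mu$ and $\det A_1=\det A_0+\mu(\partial_uf-\partial_vf)$. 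Using $\partial_1q\le 0$ and $\partial_2q\le 0$ one rewrites $\det A_0=(\partial_vf-\partial_uf)(\mu-\partial_1q)+\partial_uf\,\partial_2q$ with $\mu-\partial_1q\ge 0$ and $\partial_uf\,\partial_2q\le 0$; hence $\det A_0>0$ forces $\partial_vf>\partial_uf$. Consequently $\operatorname{tr}A_1=(\partial_uf-\partial_vf)+\partial_1q+\partial_2q<0$, which disposes of the trace alternative.

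The hard part will be the determinant estimate $\det A_1\ge 0$. Here $\det A_1=|\partial_1q|\,(\partial_vf-\partial_uf)-\partial_uf\,|\partial_2q|$ is a difference of two non-negative quantities, and since the non-local correction $\mu(\partial_vf-\partial_uf)\ge 0$ may a priori be comparable in size to $\det A_0$, positivity of $\det A_1$ does not follow from $\det A_0>0$ by itself. I would close this step with a quantitative bound on $\mu=c|\Gamma|\,\partial_3q$ relative to $|\partial_1q|$ and to $\det A_0$: in the Langmuir sorption law \eqref{eq:flux2} the adsorption flux is linear in $V$, which ties $\partial_3q$ to $\partial_1q/V_*$ and makes $\mu$ comparable to $|\partial_1q|$ (smaller precisely when $V_0>c|\Gamma|$), while the smallness hypotheses already used in Section \ref{sec:turing} give the additional control of $\partial_uf$ and $\partial_vf$ at $(u_*,v_*)$. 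Inserting such estimates into the expression for $\det A_1$ then yields $\det A_1\ge 0$ and finishes the argument; this is where essentially all of the effort will go.
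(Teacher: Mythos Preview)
Your set-up with the two Jacobians $A_0,A_1$ and the trace computation are precisely the paper's argument, phrased contrapositively. The paper argues by contradiction: it records the ODE-stability conditions \eqref{eq:Tu-gen-1}, \eqref{eq:Tu-gen-2} for $A_0$, then invokes the Turing instability criteria \eqref{eq:rd-Tu3}, \eqref{eq:rd-Tu4} from Appendix~\ref{app:turing} for the frozen local system \eqref{eq:u-gen-het}, \eqref{eq:v-gen-het}, obtaining \eqref{eq:Tu-gen-3}, \eqref{eq:Tu-gen-4}. After observing that \eqref{eq:Tu-gen-4} is automatic from the sign assumptions, it uses \eqref{eq:Tu-gen-3} together with \eqref{cdt:q} to extract $\partial_uf-\partial_vf>0$; inserting this into the rewritten determinant condition \eqref{eq:Tu-gen-2b} renders both summands on the right nonpositive, contradicting $\det A_0>0$. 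That is exactly your implication $\det A_0>0\Rightarrow\partial_vf>\partial_uf\Rightarrow\operatorname{tr}A_1<0$, read the other way.

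Where you part company is the ``hard part''. The paper never attempts to prove $\det A_1\ge 0$; having excluded \eqref{eq:Tu-gen-3} it regards the Turing conditions as violated and stops. Your plan to force $\det A_1\ge 0$ by invoking the particular Langmuir form \eqref{eq:flux2} and the quantitative smallness estimates of Section~\ref{sec:turing} abandons the generality of Theorem~\ref{thm:no-turing}: the statement is for arbitrary $f,q$ subject only to \eqref{cdt:q} and the sign pattern \eqref{eq:acsub1}--\eqref{eq:acsub2}, and none of those additional structural hypotheses are available. In fact, under the theorem's hypotheses alone one can arrange $\det A_1=(\partial_vf-\partial_uf)\,|\partial_1q|-\partial_uf\,|\partial_2q|<0$ (choose $|\partial_2q|$ large relative to $|\partial_1q|$ and compensate in $\det A_0>0$ with a large $\mu=c|\Gamma|\,\partial_3q$), so the inequality you aim for is simply false in this generality and the programme you outline cannot succeed. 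To align with the paper, drop the determinant step and argue directly through the necessary Turing conditions \eqref{eq:rd-Tu3}, \eqref{eq:rd-Tu4} for the local heterogeneous system, rather than through the bare spectrum of $A_1$.
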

\begin{proof}
The conditions \eqref{eq:rd-Tu1}, \eqref{eq:rd-Tu2} to ensure  that $(u_*,v_*)$ is a stable stationary point of \eqref{eq:u-gen-ode}, \eqref{eq:v-gen-ode} are
\begin{align}
	0\,&>\, \partial_u f +\partial_v g_0\,=\, \partial_u f -\partial_v f  + \partial_1 q + \partial_2 q +\partial_3qV_0', \label{eq:Tu-gen-1}\\
	0\,&<\, \partial_u f\cdot\partial_vg_0 -\partial_vf\cdot\partial_ug_0\,
	=\, \partial_u f(\partial_1 q+\partial_2q+\partial_3qV_0') - \partial_v f (\partial_1 q+\partial_3qV_0'). \label{eq:Tu-gen-2}
\end{align}
This corresponds to the stability of \eqref{eq:u-gen}, \eqref{eq:v-gen} in $(u_*,v_*)$ with respect to spatially homogeneous perturbations. Therefore, for the instability with respect to general perturbations it is sufficient to consider perturbations in direction of functions perpendicular to the constants. As above we observe that such perturbations leave $V[u,v]$ unchanged. The respective linearization corresponds to that of the system
\begin{align}
	\partial_t u \,&=\, \Delta_\Gamma u + \gamma f(u,v), \label{eq:u-gen-het}\\
	\partial_t v \,&=\, d\Delta_\Gamma v + \gamma \left(-f(u,v) + q_1(u+v,v)\right) \label{eq:v-gen-het}
\end{align}
in $(u_*,v_*)$, where
\begin{align}
	q_1(u+v,v)\,&=\, q(u+v,v,V_*),\quad V_*=V[u_*+v_*]=V_0(u_*,v_*). \label{eq:def-q1-gen}
\end{align}
We set as before $g_1(u,v)\,=\, -f(u,v) + q_1(u+v,v)$.
Then the conditions \eqref{eq:rd-Tu3}, \eqref{eq:rd-Tu4} for the instability of \eqref{eq:u-gen-het}, \eqref{eq:v-gen-het} with respect to spatially heterogeneous perturbations yield
\begin{align}
	0\,&<\, \partial_u f+ \partial_v g_1
	\,=\, \partial_u f-\partial_vf + \partial_1 q +\partial_2 q, \label{eq:Tu-gen-3}\\
	0\,&<\, (\partial_u f +\partial_v g_1)^2 - 4 (\partial_uf\cdot\partial_vg_1-\partial_vf\cdot\partial_ug_1) \notag\\
	&=\, (\partial_u f -\partial_v f -\partial_1 q +\partial_2 q)^2 -4\partial_uf \cdot\partial_2 q + 4\partial_1q\cdot\partial_2q.\label{eq:Tu-gen-4}
\end{align}
By our assumptions \eqref{eq:acsub1}, \eqref{cdt:q} we observe that the last condition is automatically satisfied. On the other hand, we obtain from \eqref{eq:Tu-gen-2} that
\begin{gather}
	0\,<\, (\partial_u f-\partial_vf)(\partial_1 q+\partial_3qV_0') 	
		+\partial_u f \partial_2q. \label{eq:Tu-gen-2b}
\end{gather}
By \eqref{eq:Tu-gen-3} and \eqref{cdt:q}  we deduce that $\partial_u f-\partial_vf\,=\,-(\partial_1 q +\partial_2 q)\,>\,0$. Using again \eqref{cdt:q} and using that $V_0'\leq 0$ this yields that the first term on the right-hand side in \eqref{eq:Tu-gen-2b} is nonpositive. But we also find by \eqref{eq:acsub1}, \eqref{cdt:q} that the second term is nonpositive. This is a contradiction. Therefore the system has no Turing-type instabilities.
\end{proof}
\section{Numerical Approach}\label{sec:numerics}

We use a finite element discretization similar to the one described in
\cite{LaVo10}. It is implemented in the adaptive finite element
toolbox AMDiS \cite{VeVo07}. 

\subsection{Discretization}

Following the surface finite element method described in
\cite{DzEl07}, we choose a triangulated discrete approximation
$\Gamma_h$ of the membrane $\Gamma$ and a triangulation $\mathcal
T_h$. We split the time interval $[0,T]$ by discrete time instants
$t_0 < t_1 < \dots < t_{M}$, from which one gets the time steps
$\Delta t_m := t_{m + 1} - t_m$, $m = 0, 1, \dots, M-1$. Given initial
conditions $u(\cdot,0) = u_0$, $v(\cdot,0) = v_0$ with $u_0,v_0 \in
H^1(\Gamma_h)$ and time discrete solutions $u^{(m)}, v^{(m)} \in
H^1(\Gamma_h)$, $m = 1, \dots, M$, we linearize all nonlinear terms
\begin{align*}
  f(u^{(m+1)}, v^{(m+1)}) \approx f(u^{(m)}, v^{(m)}) + \nabla
  f(u^{(m)}, v^{(m)}) \cdot 
  \begin{pmatrix}
    u^{(m+1)} - u^{(m)}\\
    v^{(m+1)} - v^{(m)}
  \end{pmatrix}
\end{align*}
and 
\begin{align*}
  q(u^{(m+1)}, v^{(m+1)}, V^{(m+1)}) \approx q(u^{(m)}, v^{(m)},
  V^{(m)}) \\ + \nabla_{(u,v)}
  q(u^{(m)}, v^{(m)}, V^{(m)}) \cdot 
  \begin{pmatrix}
    u^{(m+1)} - u^{(m)}\\
    v^{(m+1)} - v^{(m)}
  \end{pmatrix}.
\end{align*}
We introduce test functions $\eta^u, \eta^v \in H^1(\Gamma_h)$ and end
up with a weak formulation and semi-implicit time discretization
for $u^{(m+1)}, v^{(m+1)} \in H^1(\Gamma_h)$ of \eqref{eq:u}, \eqref{eq:v}
\begin{align*}
  \frac1{\Delta t_m}&\int_{\Gamma_h}u^{(m+1)}\eta^u 
  + \int_{\Gamma_h}\scpgh{\SG u^{(m+1)}}{\SG \eta^u }
  - \gamma\int_{\Gamma_h} \nabla f(u^{(m)}, v^{(m)}) \cdot
  \begin{pmatrix}
    u^{(m+1)}\\
    v^{(m+1)}
  \end{pmatrix}
  \eta^u\\
  &= \frac1{\Delta t_m}\int_{\Gamma_h}u^{(m)}\eta^u 
  + \int_{\Gamma_h} F_e(u^{(m)}, v^{(m)}) \eta^u
  \quad \forall \eta^u \in H^1(\Gamma_h)\\
  \frac1{\Delta t_m}&\int_{\Gamma_h}v^{(m+1)}\eta^v 
  + d\int_{\Gamma_h}\scpgh{\SG v^{(m+1)}}{\SG \eta^v }
  + \gamma\int_{\Gamma_h} \nabla f(u^{(m)}, v^{(m)}) \cdot
  \begin{pmatrix}
    u^{(m+1)}\\
    v^{(m+1)}
  \end{pmatrix}
  \eta^u\\
  &+ \gamma\int_{\Gamma_h} \nabla q(u^{(m)}, v^{(m)}, V^{(m)}) \cdot
  \begin{pmatrix}
    u^{(m+1)}\\
    v^{(m+1)}
  \end{pmatrix}
  \eta^u\\
  &= \frac1{\Delta t_m}\int_{\Gamma_h}v^{(m)}\eta^v 
  - \int_{\Gamma_h} F_e(u^{(m)}, v^{(m)}) \eta^v
  + \int_{\Gamma_h} Q_e(u^{(m)}, v^{(m)}, V^{(m)}) \eta^v
  \quad \forall \eta^v \in H^1(\Gamma_h),
\end{align*}
where
\begin{align*}
  F_e(u^{(m)}, v^{(m)}) &:= \gamma f(u^{(m)}, v^{(m)}) - \gamma \nabla
  f(u^{(m)}, v^{(m)}) \cdot
  \begin{pmatrix}
    u^{(m)}\\
    v^{(m)}
  \end{pmatrix}\\
  Q_e(u^{(m)}, v^{(m)}, V^{(m)}) &:= \gamma q(u^{(m)}, v^{(m)},V^{(m)}) -
  \gamma\nabla_{(u,v)} q(u^{(m)}, v^{(m)}, V^{(m)}) \cdot
  \begin{pmatrix}
    u^{(m)}\\
    v^{(m)}
  \end{pmatrix}.
\end{align*}
Furthermore, the non-local relation for $V^{(m)}$ is treated
explicitly by
\begin{equation*}
  V_h^{(m+1)} = V_0 - \frac1{|B_h|}\int_{\Gamma_h}(u^{(m)}+v^{(m)})
\end{equation*}
for given $V_0$ and the inner $B_h$ of $\Gamma_h$. To discretize in
space, let $\mathbb V_h$ the finite element space of globally
continuous, piecewise linear elements. In addition, with $(\psi_i)_i$
the standard nodal basis of $\mathbb V_h$ and $u_h^{(m+1)},
v_h^{(m+1)} \in \mathbb V_h$ we write $u_h^{(m+1)} = \sum \limits_i
U_i^{(m+1)}\psi_i$ and $v_h^{(m+1)} = \sum \limits_i V_i^{(m+1)}
\psi_i$ with $U_i^{(m+1)}, V_i^{(m+1)} \in \R$. Furthermore, we define
$\matr{U}^{(m+1)}=(U_i^{(m+1)})_i$ and
$\matr{V}^{(m+1)}=(V_i^{(m+1)})_i$. This leads to the linear system of
equations 
\begin{multline*}
  \begin{pmatrix}
    \frac1{\taum}\matr{M} + \matr{A} - \matr{F_u^{\text{impl}}} 
    & \frac{1}{\taum} \matr{M} - \matr{F_v^{\text{impl}}} \\
    \matr{F_u^{\text{impl}}} - \matr{Q_u^{\text{impl}}} 
    &   \frac{1}{\taum} \matr{M}
    + d \matr{A}  + \matr{F_v^{\text{impl}}}  - \matr{Q_v^{\text{impl}}}
  \end{pmatrix}
  \begin{pmatrix}
    \matr{U}^{(m+1)} \\
    \matr{V}^{(m+1)}  
  \end{pmatrix}\\
  =
  \begin{pmatrix}
    \frac{1}{\taum} \matr{M} \matr{U}^{(m)} + \matr{F^{\text{expl}}} \\
    \frac{1}{\taum} \matr{M}\matr{V}^{(m)}
    - \matr{F^{\text{expl}}} + \matr{Q^{\text{expl}}}
  \end{pmatrix}
\end{multline*}
with
\begin{align*}
  \matr{M} &= (M_{ij})  &M_{ij} &= (\psi_i, \psi_j)_{\Gamma_h},\\
  \matr{A} &= (A_{ij}) & A_{ij}&= (\nabla \psi_i, \nabla
  \psi_j)_{\Gamma_h},\\
  \matr{A}^2 &= (A_{ij}^2) & A_{ij}^2 &= (A(\D\phi_h^{(m)})\nabla
  \psi_i, \nabla \psi_j)_{\Gamma_h}, \\ 
  \matr{F_u^{\text{impl}}}   &= ((F_u^{\text{impl}})_{ij}) & (F_u^{\text{impl}})_{ij} &=
  (\partial_uf(u_h^{(m)},v_h^{(m)}) \psi_i, \psi_j)_{\Gamma_h},\\
  \matr{F_v^{\text{impl}}}   &= ((F_v^{\text{impl}})_{ij}) & (F_v^{\text{impl}})_{ij} &=
  (\partial_vf(u_h^{(m)},v_h^{(m)}) \psi_i, \psi_j)_{\Gamma_h},\\
  \matr{Q_u^{\text{impl}}}   &= ((Q_u^{\text{impl}})_{ij}) & (Q_u^{\text{impl}})_{ij} &=
  (\partial_uq(u_h^{(m)},v_h^{(m)},V_h^{(m)}) \psi_i, \psi_j)_{\Gamma_h},\\
  \matr{Q_v^{\text{impl}}}   &= ((Q_v^{\text{impl}})_{ij}) & (Q_v^{\text{impl}})_{ij} &=
  (\partial_vq(u_h^{(m)},v_h^{(m)},V_h^{(m)}) \psi_i, \psi_j)_{\Gamma_h},\\
  \matr{F^{\text{expl}}}   &= (F^{\text{expl}}_{i}) & F^{\text{expl}}_{i} &=
  (F_e(u_h^{(m)},v_h^{(m)}), \psi_i)_{\Gamma_h},\\
  \matr{Q^{\text{expl}}}   &= (Q^{\text{expl}}_{i}) & Q^{\text{expl}}_{i} &=
  (Q_e(u_h^{(m)},v_h^{(m)}), \psi_i)_{\Gamma_h},
\end{align*}
where $(\cdot, \cdot)_{\Gamma_h}$ denotes $L^2$-scalar product. The above
linear system has to be solved in every time step, which is done by a
stabilized bi-conjugate gradient method (BiCGStab).

\subsection{Numerical Results}
First, we present numerical results reproducing the results of the
stability analysis in Section \ref{sec:turing}. To be more precise, we
choose a set of parameters fulfilling the conditions of Theorem
\ref{thm:turing} sufficient for instability, which is the basis of our
further numerical investigations:
\begin{equation}
\label{eq:parameters}
  d=1000;\;a_1 = 0; \; a_2 = 20; \; a_3 = 160; \; a_4 = 1; \; a_5 = 0.5; \; a_6
  = 0.1;\; a_{-6} = 1;\; \gamma = 400.
\end{equation}
Furthermore, we consider the unit-sphere $\Gamma = S^2$ and random initial
conditions $u_0, v_0: \Gamma_h \to [0,0.02]$ and $V_0 =
10$. Fig. \ref{fig:basic} shows the corresponding discrete solutions
$u_h, v_h$ at different times. A stationary pattern with a single spot appears.
\begin{figure}[here]
\hfill
\includegraphics*[width=0.14\textwidth]{./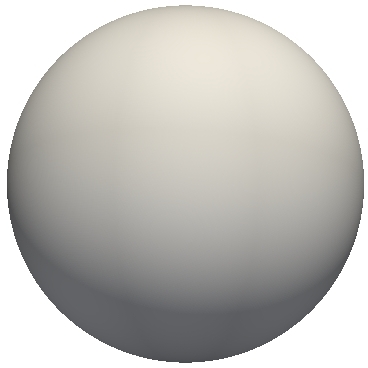}
\hfill
\includegraphics*[width=0.14\textwidth]{./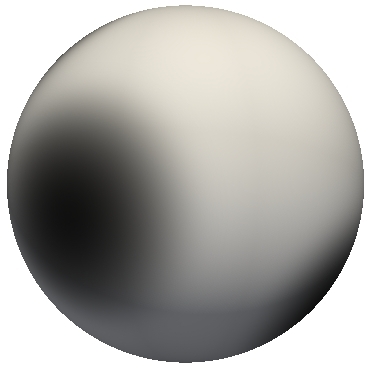}
\hfill
\includegraphics*[width=0.14\textwidth]{./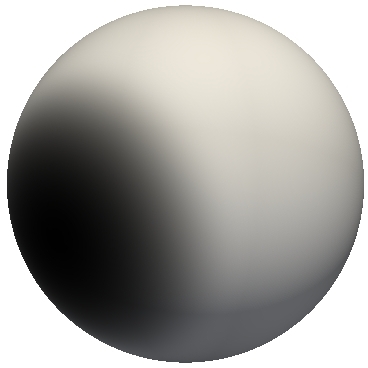}
\hfill
\includegraphics*[width=0.14\textwidth]{./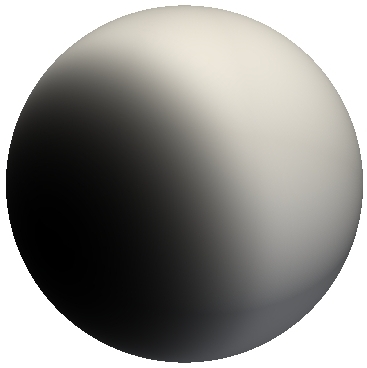}
\hfill
\includegraphics*[width=0.035\textwidth]{./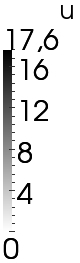}\\
\vspace{3ex}
\hfill
\includegraphics*[width=0.14\textwidth]{./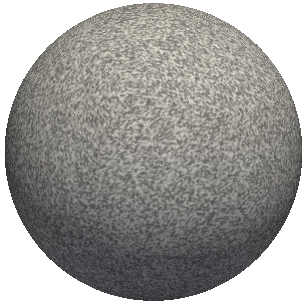}
\hfill
\includegraphics*[width=0.14\textwidth]{./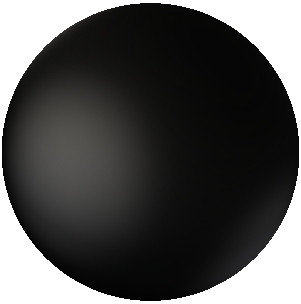}
\hfill
\includegraphics*[width=0.14\textwidth]{./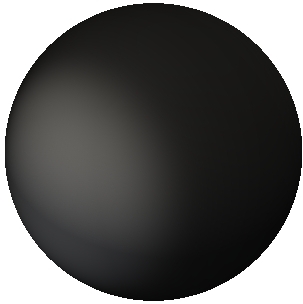}
\hfill
\includegraphics*[width=0.14\textwidth]{./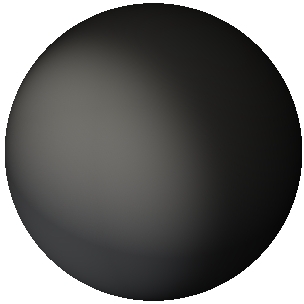}
\hfill
\includegraphics*[width=0.035\textwidth]{./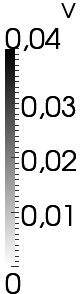}
\caption{\label{fig:basic}\em From left to right: the discrete
  solutions $u_h,v_h$ for $t=t_0=0$, $t = 0.5$, $t = 5$, and $t
  = 25$.}
\end{figure}

\subsubsection{Varying Parameters}

Based on the choice of parameters \eqref{eq:parameters} we investigate
the influence of varying parameters. First, we observe that
doubling the parameter $a_2$ leads to spatially homogeneous stationary
solutions, whereas halving $a_2$ leads to
stationary patterns with two maxima for $u_h$ (see Fig. \ref{fig:a2}).

\begin{figure}[here]
\includegraphics*[width=0.22\textwidth]{./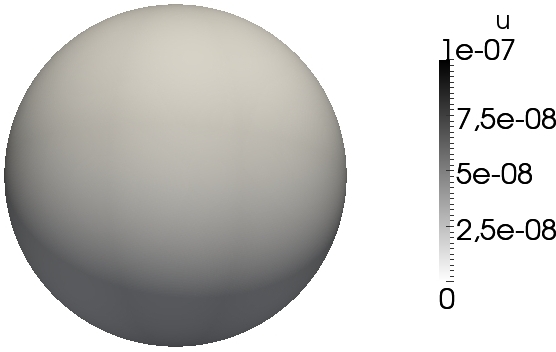}
\includegraphics*[width=0.22\textwidth]{./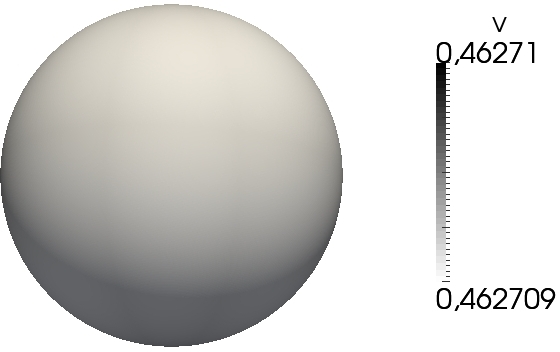}
\includegraphics*[width=0.22\textwidth]{./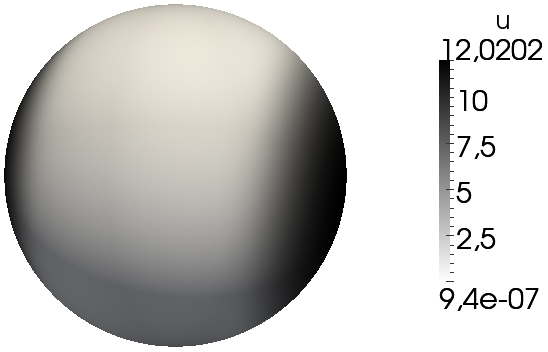}
\includegraphics*[width=0.22\textwidth]{./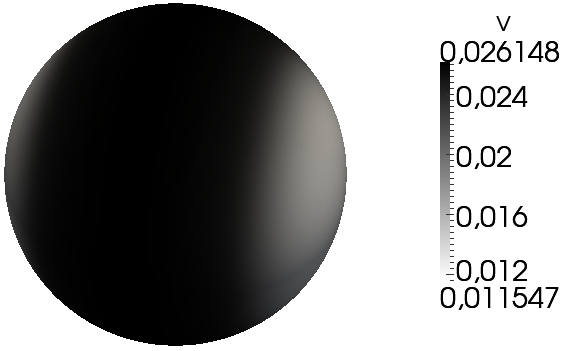}
\caption{\label{fig:a2}\em From left to right: the discrete
  solutions $u_h,v_h$ for $a_2=40$ (left) and $a_2=10$ (right) $t = 25$.}
\end{figure}

Additionally, we observe that halving the parameter $a_3$ leads to
spatially homogeneous stationary solutions, whereas doubling $a_3$ leads to
stationary patterns with two maxima for $u_h$ (see Fig. \ref{fig:a3}).

\begin{figure}[here]
\includegraphics*[width=0.22\textwidth]{./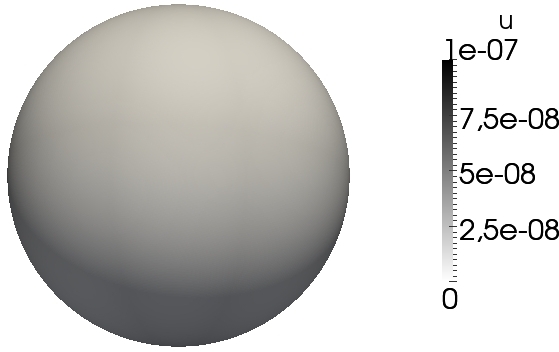}
\includegraphics*[width=0.22\textwidth]{./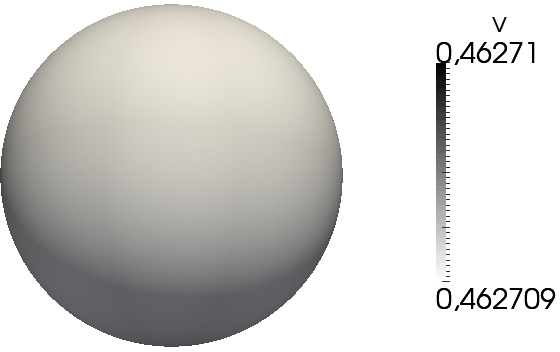}
\includegraphics*[width=0.22\textwidth]{./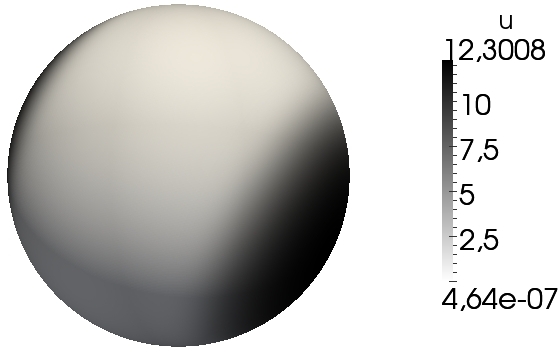}
\includegraphics*[width=0.22\textwidth]{./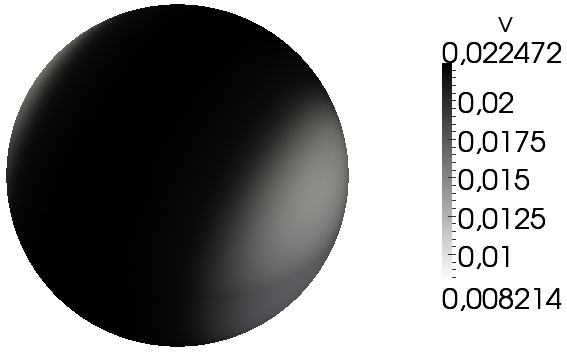}
\caption{\label{fig:a3}\em From left to right: the discrete
  solutions $u_h,v_h$ for $a_3=80$ (left) and $a_3=320$ (right) $t = 25$.}
\end{figure}

Another interesting behavior can be seen by varying the diffusion
constant $d$. While the estimates \eqref{cdt:d1} and \eqref{cdt:d2}
lead to a condition $d \gtrsim 790$ sufficient for instability, an exact
computation yields a maximal diffusion constant $d_c \approx 101$
satisfying equality in one of relations \eqref{eq:rd-Tu3},
\eqref{eq:rd-Tu4}. This is reproduced in Fig. \ref{fig:d} showing
stability for $d=100$ and instability for $d=105$.

\begin{figure}[here]
\includegraphics*[width=0.22\textwidth]{./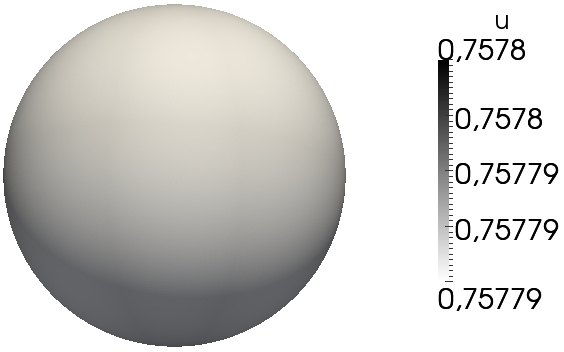}
\includegraphics*[width=0.22\textwidth]{./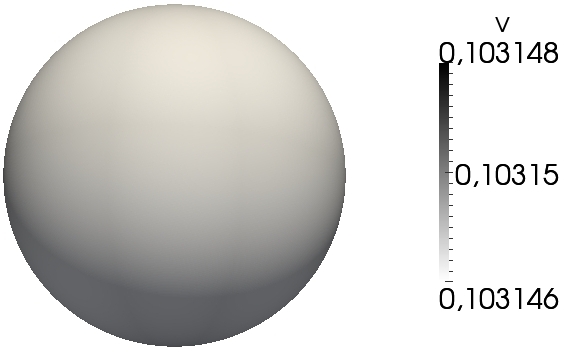}
\includegraphics*[width=0.22\textwidth]{./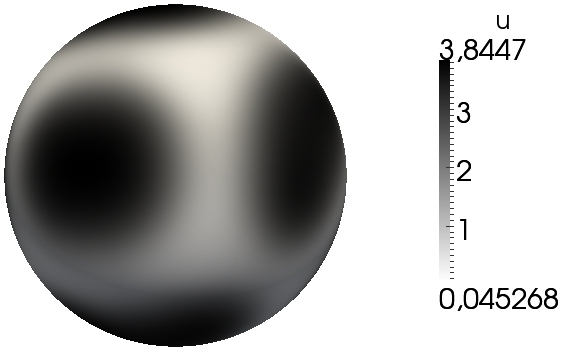}
\includegraphics*[width=0.22\textwidth]{./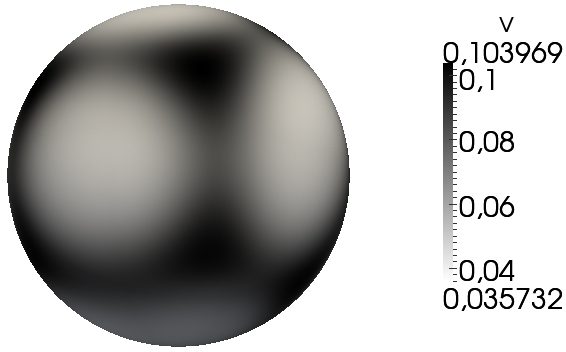}
\caption{\label{fig:d}\em From left to right: the discrete
  solutions $u_h,v_h$ for $d=100$ (left) and $d=105$ (right) $t = 25$.}
\end{figure}

\subsection{Comparison with `realistic' parameter ranges}
A full set of realistic parameters is not available, but there are
several \emph{in vivo} and \emph{in vitro} measurements giving some
estimates or average values. For the case of the Cdc42 GTPase cycle in
yeast cells we have collected data from \cite{GoPo08}, \cite{Jilk03},
\cite{KaCh07}, and \cite{GaGJ07} and evaluated our model for the
following set of values,
\begin{align*}
 &k_1 := 1.056831769\cdot 10^{-8} ,\quad k_2 := 0.1056831769\cdot
 10^{-5} ,\quad k_3 := 946.2243938\\ & k_4 := 18.92448788 ,\quad k_{5}
 := 0.1056831769\cdot 10^{-2} ,\quad k_{-5} := 0.3 ,\quad b_6 :=
 0.3170495307\cdot 10^{-1} \\& b_{-6} := 0.133 ,\quad g_0 :=
 37848.97575 ,\quad d_u := 2.5 \cdot 10^{-15} ,\quad c_{max} :=
 47311.21969 ,\\ &V_0 := 4.894264108\cdot10^{10}, 
\end{align*}
where the units are as given in Section \ref{sec:model}. 

We find for these values that a homogeneous stationary state of
activator--substrate-depletion type in fact exists. For a Turing-type
instability we need a lateral diffusion for activated GTPase of order
$10^{-8}\text{m}^2\text{s}^{-1}$, which is unrealistically
large. Nevertheless assuming such a value we obtain a condition on the
spatial scale: We find that $R$ has to be at least of order
$10^{-6}\text{m}$, which is close to the typical diameter of a yeast
cell. We therefore see that the critical condition is in fact the
large difference in diffusion.

The uncertainty in the parameter values is quite large: there are not
enough data for individual proteins available and the parameters
chosen above are therefore a composite of available data. Many
measurements are taken \emph{in vitro} and are only an estimate for
\emph{in vivo} conditions. Comparing different sources and different
GTPases one may find variations up to order 10 in the 
parameters. Within the set of parameter values allowing for Turing instabilities we find values that are within the range of realistic values, except that the ratio $d$ of lateral diffusion values has to be of order at least $10^2$. Such a large difference in free lateral diffusion for
active and inactive GTPase seems unrealistic. However, heterogeneities
of the cell membrane may lead to differences in the `effective'
diffusion speed, see the discussion below. 
\section{Discussion}\label{sec:discussion}
The GTPase cycle presents an example for a coupled system of processes in the inner volume and on the outer membrane. We have proposed a mathematical model in the form of a fully coupled PDE system. A two-variable reduction yields a \emph{non-local} reaction--diffusion system on the membrane. With the interest in finding mechanisms that support the emergence of cell polarity we have investigated pattern forming properties. We have shown that the reduced model in principle supports Turing type diffusive instabilities, but -- as for general local RD systems -- needs large differences in diffusion constants. In numerical simulations we have confirmed our theoretical findings and have explored the type of pattern produced and the influence of parameter changes. Both the formulation of the model and the numerical schemes are prepared to investigate more complex models and to incorporate additional features.

In similar but different models diffusive instabilities have been shown to exist \cite{BaJo05}, \cite{GoPo08}, or not to exist \cite{AAWW08}, \cite{BDKR07} (unless a phase separation force is added). Particularly interesting is a comparison with the work of Goryachev and Pokhilko \cite{GoPo08}. Their model is more detailed in the set of variables they consider and also accounts -- at least partially -- for the different dimensionalities of the processes: the membrane is thought as a thin compartment with positive volume and the cell as an adjacent bigger compartment. Concentrations on the membrane and in the inner cell are weighed with a factor that accounts for the different sizes of the volumes.  A major difference to our work is that the mathematical analysis in \cite{GoPo08} treats all variables on one common domain of definition (for both cytosolic and membrane-bound quantities). In our approach on the other hand we distinguish explicitly between the cytosolic GTPase variable $V$ defined in $B$ and the membrane variables defined on $\Gamma$, which then makes laws for fluxes from the cytosol to the membrane necessary and meaningful.  Nevertheless there are more similarities between these two approaches. Also in \cite{GoPo08} a non-local reduction to a two-variable system is given. The substrate there is represented by the sum of cytosolic and membrane bound GTPase and inherits a higher diffusion constant than the solely membrane bound active form. In view of pattern forming properties their model then produces Turing instabilities, in more realistic parameter ranges than our model.

One of the main features of our reduced model is that large differences in the lateral diffusion constant for active and inactive GTPase are required to obtain a Turing instability. Mathematical models with a simpler (but more artificial) dimensional coupling on the other hand do support diffusive instabilities. However, in these studies differences in lateral diffusion were put more directly into the model and Turing instabilities then appear as a consequence of this model design. Our analysis demonstrates that it is not clear whether large cytosolic diffusion is sufficient for cell polarization by a Turing mechanism. To clarify this more detailed studies are necessary. The absence of realistic Turing patterns in our model might be a consequence of our reduction to the infinite cytosolic diffusion limit. This leads to constant cytosolic GTPase concentration, whereas the Turing mechanism intimately relies on spatial heterogeneity. We have performed numerical experiments to compare the behavior of the fully coupled system and the reduced model. For large but finite cytosolic diffusion our simulations for the full model agreed qualitatively with corresponding simulations for the reduction. However, it is difficult to numerically explore the Turing space of the fully coupled model and error estimates for differences of solutions to the reduced model and the three-variable system are at present not available. 

In our model various properties of the GTPase cycle in living cells have been neglected that in principle may contribute to polarization. 
In a recent paper by Butler and Goldenfeld \cite{BuGo11} it was shown that the inclusion of intrinsic noise in reaction-diffusion systems can lead to the formation of Turing-type  `quasipatterns' for parameter values substantially different from the Turing space for the corresponding noise-free system. The heterogeneity of  the plasma membrane might influence polarization in living cells: Microdomains with different lipid composition are present, and active and inactive forms of GTPase possibly associate with different preferences to distinct microdomains. For the case of Ras GTPase in \cite{LSSS05} a severe reduction in Ras mobility has been observed upon activation. This effect might introduce a difference in diffusion sufficient for Turing patterns. Finally, other mechanisms different from Turing pattern formation might be responsible for cell polarization. Possible candidates are a wave pinning mechanism \cite{MoJE08}, or an intrinsically stochastic mechanism that relies on a particle based approach (as proposed in \cite{AAWW08}) instead of a continuous model. 

A better understanding of symmetry breaking properties in models where systems of different dimensionalities are coupled remains important. We have proposed a more detailed coupling model that presents a good basis for future extensions. A stability analysis of the fully coupled model and the inclusion of possible additional contributions to cell polarization are interesting tasks for further studies that might lead to a more complete picture of the origin of cell polarization.
\begin{appendix}
\section{Turing instability}
\label{app:turing}
Since in our GTPase cycle model the classical conditions for Turing type pattern have to modified by the non-locality of the source term, we briefly outline the analysis of the classical Turing mechanism. We follow here \cite[Section 5.3]{Jost07}. Let a spatial domain $\Omega\subset \Rn$ be given and consider a system of reaction--diffusion equations
\begin{align}
	\partial_t u\,&=\, \Delta u + \gamma f(u,v), \label{eq:rd-1}\\
	\partial_t v\,&=\, d\Delta v + \gamma g(u,v), \label{eq:rd-2}
\end{align}
where $u=u(x,t)$, $v=v(x,t)$, $d>1$, $\gamma>0$ and where $f:\R^2\to\R$, $g:\R^2\to\R$ are given functions. We complement \eqref{eq:rd-1}, \eqref{eq:rd-2} first by initial conditions
\begin{gather*}
	u(x,0)\,=\,u^0(x),\qquad v(x,0)\,=\, v^0(x),
\end{gather*}
for given $u^0,v^0:\Omega\,\to\,\R$, and second by zero Neumann-boundary data
\begin{gather*}
	\nabla u\cdot\nu_\Omega u\,=\, \nabla v\cdot\nu_\Omega\,=\, 0\qquad\text{ on }\partial\Omega,
\end{gather*}
where $\nu_\Omega$ denotes the outer normal of $\Omega$. With this boundary conditions the following analysis carries immediately over to the case that the spatial domain is given by a compact closed smooth hypersurface in $\Rn$, with the only difference that the Laplace operator has to be replaced by the surface Laplace--Beltrami operator.

The Turing mechanism is described by a stationary point $(u_*,v_*)$ that is spatially homogeneous and linearly stable under spatially homogeneous perturbation, but that is linearly unstable under heterogeneous perturbations. We therefore consider now $(u_*,v_*) \in\R^2$ with 
\begin{gather*}
	f(u_*,v_*)\,=\,0,\qquad g(u_*,v_*)\,=\, 0.
\end{gather*}
For spatially homogeneous solutions \eqref{eq:rd-1}, \eqref{eq:rd-2} reduce to the ODE system
\begin{align}
	\partial_t u\,&=\, \gamma f(u,v), \label{eq:rd-ode-1}\\
	\partial_t v\,&=\, \gamma g(u,v). \label{eq:rd-ode-2}
\end{align}
The condition of linear stability then reduces to the conditions that the trace of the Jacobian $D(f,g)$ is negative and that the determinant of $D(f,g)$ is positive, \emph{i.e.}
\begin{align}
	\partial_uf(u_*,v_*) + \partial_vg(u_*,v_*)\,&<\, 0, \label{eq:rd-Tu1}\\
	\partial_uf(u_*,v_*)\partial_vg(u_*,v_*) - \partial_vf(u_*,v_*)\partial_ug(u_*,v_*)\,&>\,0. \label{eq:rd-Tu2}
\end{align}
The linearization of \eqref{eq:rd-1}, \eqref{eq:rd-2} in $(u_*,v_*)$ for perturbations in direction of arbitrary smooth functions $\varphi,\psi:\Omega\times(0,T)\to\R$ is given by the system
\begin{gather}
	\partial_t\begin{pmatrix} \varphi\\ \psi \end{pmatrix} \,=\, \begin{pmatrix} 1 & 0 \\ 0 &d\end{pmatrix}\begin{pmatrix} \varphi\\ \psi \end{pmatrix} + \gamma \begin{pmatrix} \partial_u f(u_*,v_*) & \partial_v f(u_*,v_*) \\ \partial_u g(u_*,v_*) & \partial_v g(u_*,v_*)\end{pmatrix}\begin{pmatrix} \varphi\\ \psi \end{pmatrix}. \label{eq:rd-lin}
\end{gather}
We next take the complete orthonormal basis $(w_j)_{j\in\N_0}$ of $L^2(\Gamma)$ given by eigenvectors of the Laplacian with respect to zero Neumann boundary data,
\begin{align}
	-\Delta w_j \,&=\, \lambda_j w_j\quad \text{ in }\Omega, \label{eq:eigen}\\
	\nabla w_j\cdot \nu_\Omega \,&=\, 0 \quad\text{ on }\partial\Omega, \label{eq:eigen-bdry}
\end{align}
for $j=0,1,2,\ldots$ and where $\lambda_0=0<\lambda_1\leq\lambda_2\leq\ldots$ denote the corresponding eigenvalues. By the orthonormality condition and \eqref{eq:eigen}, \eqref{eq:eigen-bdry} we have
\begin{align}
	\|w_j\|_{L^2(\Omega)}\,&=\,1\quad &\text{ for all }j\in\N_0, \label{eq:app-modes}\\
	\int_\Omega w_j w_i \,dx \,&=\, \int_\Omega \nabla w_j\cdot\nabla w_i \,dx \,=\, 0 \quad &\text{ for all }i,j\in\N_0, i\neq j. \label{eq:app-onb}
\end{align}
We then decompose $\varphi(\cdot,t),\psi(\cdot,t)$ with respect to this orthonormal basis,
\begin{align*}
	\varphi(x,t)\,&=\, \alpha_0(t)w_0 + \sum_{j\in\N} \beta_j(t)w_j(x),\\
	\psi(x,t)\,&=\, \beta_0(t)w_0 + \sum_{j\in\N} \beta_j(t)w_j(x).
\end{align*}
Inserting this representation in \eqref{eq:u-lin}, \eqref{eq:v-lin} and taking the $L^2(\Gamma)$ scalar product with $w_i$ by the orthonormality and \eqref{eq:app-modes} the equation \eqref{eq:rd-lin} decomposes into linear systems
\begin{gather}
	\partial_t\begin{pmatrix} \alpha_i\\ \beta_i \end{pmatrix} \,=\, -\lambda_i \begin{pmatrix} 1 & 0 \\ 0 &d\end{pmatrix} \begin{pmatrix} \alpha_i\\ \beta_i \end{pmatrix} + \gamma \begin{pmatrix} \partial_u f(u_*,v_*) & \partial_v f(u_*,v_*) \\ \partial_u g(u_*,v_*) & \partial_v g(u_*,v_*)\end{pmatrix}\begin{pmatrix} \alpha_i\\ \beta_i \end{pmatrix} \label{eq:app-alpha-beta}
\end{gather}
for $i=0,1,\ldots$. The case $i=0$ corresponds to the case of spatially homogeneous perturbations. In order to have an instability of \eqref{eq:rd-1}, \eqref{eq:rd-2} we therefore need that for an $i\in\N$ \eqref{eq:app-alpha-beta} is unstable. This gives the necessary conditions \cite[Theorem 5.3.1]{Jost07}
\begin{align}
	d\partial_uf(u_*,v_*) + \partial_vg(u_*,v_*)\,&>\, 0, \label{eq:rd-Tu3}\\
	\big(d\partial_uf(u_*,v_*) + \partial_vg(u_*,v_*)\big)^2 -4d \Big(\partial_uf(u_*,v_*)\partial_vg(u_*,v_*) - \partial_vf(u_*,v_*)\partial_ug(u_*,v_*)\Big)\,&>\,0. \label{eq:rd-Tu4}
\end{align}
In order to have an instability under these conditions it is sufficient that there exists an eigenvalue $\lambda_i$, $i\in\N$, such that
\begin{gather*}
	\mu_-\,<\,\lambda_i\,<\, \mu_+
\end{gather*}
where $\mu=\mu_\pm$ are the roots of the quadratic equation
\begin{gather*}
	d\mu^2 -\gamma \big(d\partial_uf(u_*,v_*) + \partial_vg(u_*,v_*)\big)\mu + \gamma^2 \big(\partial_uf(u_*,v_*)\partial_vg(u_*,v_*) - \partial_vf(u_*,v_*)\partial_ug(u_*,v_*)\big) \,=\, 0.
\end{gather*}

\end{appendix}


\begin{thebibliography}{10}


\providecommand{\url}[1]{\texttt{#1}}
\expandafter\ifx\csname urlstyle\endcsname\relax
  \providecommand{\doi}[1]{doi: #1}\else
  \providecommand{\doi}{doi: \begingroup \urlstyle{rm}\Url}\fi

\bibitem[1]{AAWW08}
\textsc{Altschuler}, Steven~J. ; \textsc{Angenent}, Sigurd~B. ; \textsc{Wang},
  Yanqin  ; \textsc{Wu}, Lani~F.:
\newblock On the spontaneous emergence of cell polarity.
\newblock {In: }\emph{Nature} 454 (2008), Aug, Nr. 7206, 886--889.
\newblock \url{http://dx.doi.org/10.1038/nature07119}. --
\newblock DOI 10.1038/nature07119

\bibitem[2]{BoRW07}
\textsc{Bos}, Johannes~L. ; \textsc{Rehmann}, Holger  ; \textsc{Wittinghofer},
  Alfred:
\newblock GEFs and GAPs: critical elements in the control of small G proteins.
\newblock {In: }\emph{Cell} 129 (2007), Jun, Nr. 5, 865--877.
\newblock \url{http://dx.doi.org/10.1016/j.cell.2007.05.018}. --
\newblock DOI 10.1016/j.cell.2007.05.018

\bibitem[3]{BDKR07}
\textsc{Brusch}, Lutz ; \textsc{Del Conte-Zerial}, Perla ;
  \textsc{Kalaidzidis}, Yannis ; \textsc{Rink}, Jochen ; \textsc{Habermann},
  Bianca ; \textsc{Zerial}, Marino  ; \textsc{Deutsch}, Andreas:
\newblock Protein Domains of {GTP}ases on Membranes: Do They Rely on Turing’s
  Mechanism?
\newblock {In: }\textsc{Deutsch}, Andreas (Hrsg.) ; \textsc{Brusch}, Lutz
  (Hrsg.) ; \textsc{Byrne}, Helen (Hrsg.) ; \textsc{Vries}, Gerda de (Hrsg.)  ;
  \textsc{Herzel}, Hanspeter (Hrsg.): \emph{Mathematical Modeling of Biological
  Systems, Volume I}.
\newblock Birkh\"auser Boston, 2007

\bibitem[4]{BuGo11}
\textsc{Butler}, Thomas ; \textsc{Goldenfeld}, Nigel:
\newblock Fluctuation-driven Turing patterns.
\newblock {In: }\emph{Phys. Rev. E} 84 (2011), Jul, 011112.
\newblock \url{http://dx.doi.org/10.1103/PhysRevE.84.011112}. --
\newblock DOI 10.1103/PhysRevE.84.011112

\bibitem[5]{DiHS10}
\textsc{Dierkes}, U. ; \textsc{Hildebrandt}, S.  ; \textsc{Sauvigny}, F.:
\newblock \emph{Minimal Surfaces}.
\newblock Springer, 2010 (Grundlehren der mathematischen Wissenschaften Series
  pt. 1). --
\newblock ISBN 9783642116971

\bibitem[6]{DzEl07}
\textsc{Dziuk}, G. ; \textsc{Elliott}, C.~M.:
\newblock Finite elements on evolving surfaces.
\newblock {In: }\emph{IMA J. Numer. Anal.} 27 (2007), S. 262--292

\bibitem[7]{GaGJ07}
\textsc{Garmendia-Torres}, Cecilia ; \textsc{Goldbeter}, Albert  ;
  \textsc{Jacquet}, Michel:
\newblock Nucleocytoplasmic oscillations of the yeast transcription factor
  Msn2: evidence for periodic PKA activation.
\newblock {In: }\emph{Curr Biol} 17 (2007), Jun, Nr. 12, 1044--1049.
\newblock \url{http://dx.doi.org/10.1016/j.cub.2007.05.032}. --
\newblock DOI 10.1016/j.cub.2007.05.032

\bibitem[8]{GoRa05}
\textsc{Goody}, R.~S. ; \textsc{Rak}, A.  ; \textsc{Alexandrov}, K.:
\newblock The structural and mechanistic basis for recycling of Rab proteins
  between membrane compartments.
\newblock {In: }\emph{Cell Mol Life Sci} 62 (2005), Aug, Nr. 15, 1657--1670.
\newblock \url{http://dx.doi.org/10.1007/s00018-005-4486-8}. --
\newblock DOI 10.1007/s00018--005--4486--8

\bibitem[9]{GoPo08}
\textsc{Goryachev}, Andrew~B. ; \textsc{Pokhilko}, Alexandra~V.:
\newblock Dynamics of Cdc42 network embodies a Turing-type mechanism of yeast
  cell polarity.
\newblock {In: }\emph{FEBS Lett} 582 (2008), Apr, Nr. 10, 1437--1443.
\newblock \url{http://dx.doi.org/10.1016/j.febslet.2008.03.029}. --
\newblock DOI 10.1016/j.febslet.2008.03.029

\bibitem[10]{GrON06}
\textsc{Grosshans}, Bianka~L. ; \textsc{Ortiz}, Darinel  ; \textsc{Novick},
  Peter:
\newblock Rabs and their effectors: achieving specificity in membrane traffic.
\newblock {In: }\emph{Proc Natl Acad Sci U S A} 103 (2006), Aug, Nr. 32,
  11821--11827.
\newblock \url{http://dx.doi.org/10.1073/pnas.0601617103}. --
\newblock DOI 10.1073/pnas.0601617103

\bibitem[11]{GuAG05}
\textsc{Guo}, Zhong ; \textsc{Ahmadian}, Mohammad~R.  ; \textsc{Goody},
  Roger~S.:
\newblock Guanine nucleotide exchange factors operate by a simple allosteric
  competitive mechanism.
\newblock {In: }\emph{Biochemistry} 44 (2005), Nov, Nr. 47, 15423--15429.
\newblock \url{http://dx.doi.org/10.1021/bi0518601}. --
\newblock DOI 10.1021/bi0518601

\bibitem[12]{JaHa05}
\textsc{Jaffe}, Aron~B. ; \textsc{Hall}, Alan:
\newblock RHO GTPASES: Biochemistry and Biology.
\newblock {In: }\emph{Annual Review of Cell and Developmental Biology} 21
  (2005), Nr. 1, 247-269.
\newblock \url{http://dx.doi.org/10.1146/annurev.cellbio.21.020604.150721}. --
\newblock DOI 10.1146/annurev.cellbio.21.020604.150721

\bibitem[13]{Jilk03}
\textsc{Jilkine}, Alexandra:
\newblock \emph{Mathematical Study of Rho GTPases in Motile Cells}, The
  University of British Columbia, Diss., 2003

\bibitem[14]{BaJo05}
\textsc{John}, Karin ; \textsc{B\"ar}, Markus:
\newblock Alternative mechanisms of structuring biomembranes: self-assembly
  versus self-organization.
\newblock {In: }\emph{Phys Rev Lett} 95 (2005), Nov, Nr. 19, S. 198101

\bibitem[15]{Jost07}
\textsc{Jost}, J{\"u}rgen:
\newblock \emph{Graduate Texts in Mathematics}. Bd. 214: {\emph{Partial
  differential equations}}.
\newblock Second.
\newblock New York : Springer, 2007. --
\newblock  xiv+356 S.
\newblock \url{http://dx.doi.org/10.1007/978-0-387-49319-0}.
\newblock \url{http://dx.doi.org/10.1007/978-0-387-49319-0}. --
\newblock ISBN 978--0--387--49318--3; 0--387--49318--2

\bibitem[16]{KaCh07}
\textsc{Katanaev}, Vladimir~L. ; \textsc{Chornomorets}, Matey:
\newblock Kinetic diversity in G-protein-coupled receptor signalling.
\newblock {In: }\emph{Biochem J} 401 (2007), Jan, Nr. 2, 485--495.
\newblock \url{http://dx.doi.org/10.1042/BJ20060517}. --
\newblock DOI 10.1042/BJ20060517

\bibitem[17]{Kell09}
\textsc{Keller}, J{\"u}rgen~U.:
\newblock An Outlook on Biothermodynamics. II. Adsorption of Proteins.
\newblock {In: }\emph{Journal of Non-Equilibrium Thermodynamics} 34 (2009),
  M{\^^b a}rz, Nr. 1, 1--33.
\newblock \url{http://dx.doi.org/10.1515/JNETDY.2009.001}. --
\newblock ISSN 0340--0204

\bibitem[18]{KoMe94}
\textsc{Koch}, A.~J. ; \textsc{Meinhardt}, H.:
\newblock Biological pattern formation: from basic mechanisms to complex
  structures.
\newblock {In: }\emph{Rev. Mod. Phys.} 66 (1994), Oct, Nr. 4, S. 1481--1507.
\newblock \url{http://dx.doi.org/10.1103/RevModPhys.66.1481}. --
\newblock DOI 10.1103/RevModPhys.66.1481

\bibitem[19]{LaVo10}
\textsc{Landsberg}, C. ; \textsc{Voigt}, A.:
\newblock A multigrid finite element method for reaction-diffusion systems on
  surfaces.
\newblock {In: }\emph{Comp. Vis. Sci.} 13 (2010), Nr. 4, S. 177--185

\bibitem[20]{LMRR01}
\textsc{Lippé}, R. ; \textsc{Miaczynska}, M. ; \textsc{Rybin}, V. ;
  \textsc{Runge}, A.  ; \textsc{Zerial}, M.:
\newblock Functional synergy between Rab5 effector Rabaptin-5 and exchange
  factor Rabex-5 when physically associated in a complex.
\newblock {In: }\emph{Mol Biol Cell} 12 (2001), Jul, Nr. 7, S. 2219--2228

\bibitem[21]{LSSS05}
\textsc{Lommerse}, Piet H~M. ; \textsc{Snaar-Jagalska}, B~E. ; \textsc{Spaink},
  Herman~P.  ; \textsc{Schmidt}, Thomas:
\newblock Single-molecule diffusion measurements of H-Ras at the plasma
  membrane of live cells reveal microdomain localization upon activation.
\newblock {In: }\emph{J Cell Sci} 118 (2005), May, Nr. Pt 9, 1799--1809.
\newblock \url{http://dx.doi.org/10.1242/jcs.02300}. --
\newblock DOI 10.1242/jcs.02300

\bibitem[22]{MoJE08}
\textsc{Mori}, Yoichiro ; \textsc{Jilkine}, Alexandra  ;
  \textsc{Edelstein-Keshet}, Leah:
\newblock Wave-pinning and cell polarity from a bistable reaction-diffusion
  system.
\newblock {In: }\emph{Biophys J} 94 (2008), May, Nr. 9, 3684--3697.
\newblock \url{http://dx.doi.org/10.1529/biophysj.107.120824}. --
\newblock DOI 10.1529/biophysj.107.120824

\bibitem[23]{Murr90}
\textsc{Murray}, J.D.:
\newblock Discussion: Turing's theory of morphogenesis--Its influence on
  modelling biological pattern and form.
\newblock {In: }\emph{Bulletin of Mathematical Biology} 52 (1990), Nr. 1-2, 119
  - 152.
\newblock \url{http://dx.doi.org/DOI: 10.1016/S0092-8240(05)80007-2}. --
\newblock DOI DOI: 10.1016/S0092--8240(05)80007--2. --
\newblock ISSN 0092--8240

\bibitem[24]{NiPr77}
\textsc{Nicolis}, G. ; \textsc{Prigogine}, I.:
\newblock \emph{{Self-organization in nonequilibrium systems: from dissipative
  structures to order through fluctuations}}.
\newblock Wiley, 1977 (A Wiley-Interscience publication). --
\newblock ISBN 9780471024019

\bibitem[25]{PaBi07}
\textsc{Park}, Hay-Oak ; \textsc{Bi}, Erfei:
\newblock \emph{Central roles of small GTPases in the development of cell
  polarity in yeast and beyond}.
\newblock Washington, DC, ETATS-UNIS, 2007. --
\newblock Anglais

\bibitem[26]{Pfef03}
\textsc{Pfeffer}, Suzanne:
\newblock Membrane domains in the secretory and endocytic pathways.
\newblock {In: }\emph{Cell} 112 (2003), Feb, Nr. 4, S. 507--517

\bibitem[27]{PfAi04}
\textsc{Pfeffer}, Suzanne ; \textsc{Aivazian}, Dikran:
\newblock Targeting Rab GTPases to distinct membrane compartments.
\newblock {In: }\emph{Nat Rev Mol Cell Biol} 5 (2004), Nov, Nr. 11, 886--896.
\newblock \url{http://dx.doi.org/10.1038/nrm1500}. --
\newblock DOI 10.1038/nrm1500

\bibitem[28]{PBLH04}
\textsc{Postma}, Marten ; \textsc{Bosgraaf}, Leonard ; \textsc{Loovers},
  Harriët~M.  ; \textsc{Haastert}, Peter J M~V.:
\newblock Chemotaxis: signalling modules join hands at front and tail.
\newblock {In: }\emph{EMBO Rep} 5 (2004), Jan, Nr. 1, 35--40.
\newblock \url{http://dx.doi.org/10.1038/sj.embor.7400051}. --
\newblock DOI 10.1038/sj.embor.7400051

\bibitem[29]{SRNR00}
\textsc{S{\"o}nnichsen}, B. ; \textsc{Renzis}, S.~D. ; \textsc{Nielsen}, E. ;
  \textsc{Rietdorf}, J.  ; \textsc{Zerial}, M.:
\newblock Distinct membrane domains on endosomes in the recycling pathway
  visualized by multicolor imaging of Rab4, Rab5, and Rab11.
\newblock {In: }\emph{J Cell Biol} 149 (2000), May, Nr. 4, S. 901--914

\bibitem[30]{TaSM01}
\textsc{Takai}, Y ; \textsc{Sasaki}, T  ; \textsc{Matozaki}, T:
\newblock Small GTP-binding proteins.
\newblock {In: }\emph{Physiological Reviews} 81 (2001), Nr. 1, 153-208.
\newblock \url{http://www.ncbi.nlm.nih.gov/pubmed/11152757}

\bibitem[31]{Turi52}
\textsc{Turing}, Alan~M.:
\newblock The Chemical Basis of Morphogenesis.
\newblock {In: }\emph{Philosophical Transactions of the Royal Society of
  London. Series B, Biological Sciences} 237 (1952), Nr. 641, 37--72.
\newblock \url{http://www.jstor.org/stable/92463}. --
\newblock ISSN 00804622

\bibitem[32]{VeVo07}
\textsc{Vey}, S. ; \textsc{Voigt}, A.:
\newblock A{MDiS} --- Adaptive multidimensional simulations.
\newblock {In: }\emph{Comput. Visual. Sci.} 10 (2007), S. 57--67

\bibitem[33]{WAWL03}
\textsc{Wedlich-Soldner}, Roland ; \textsc{Altschuler}, Steve ; \textsc{Wu},
  Lani  ; \textsc{Li}, Rong:
\newblock Spontaneous Cell Polarization Through Actomyosin-Based Delivery of
  the Cdc42 GTPase.
\newblock {In: }\emph{Science} 299 (2003), Nr. 5610, 1231-1235.
\newblock \url{http://dx.doi.org/10.1126/science.1080944}. --
\newblock DOI 10.1126/science.1080944

\end{thebibliography}

\end{document}